\documentclass[11pt,a4paper]{amsart}
\usepackage{amsmath,amssymb,mathtools,booktabs,array}
\usepackage[margin=27mm]{geometry}
\usepackage{xcolor}
\usepackage[unicode,colorlinks=true,linkcolor=blue!45!black,citecolor=blue!45!black,urlcolor=blue!45!black]{hyperref}
\usepackage[nameinlink]{cleveref}
\usepackage[
    backend=biber,
    style=numeric-comp,
    giveninits=true,
    sorting=nyt,doi=false,url=false,isbn=false
]{biblatex}
\DeclareFieldFormat[article,inbook,incollection,inproceedings,unpublished]
  {title}{\mkbibemph{#1}}
\renewbibmacro*{in:}{%
  \ifentrytype{article}
    {}
    {\bibstring{in}\printunit{\intitlepunct}}}
\DeclareFieldFormat{journaltitle}{#1\isdot}

\newtheorem{theorem}{Theorem}[section]
\newtheorem{lemma}[theorem]{Lemma}
\newtheorem{proposition}[theorem]{Proposition}

\newtheorem{definition}[theorem]{Definition}
\theoremstyle{remark}\newtheorem{remark}[theorem]{Remark}
\newcommand{\R}{\mathbb R}
\newcommand{\Z}{\mathbb Z}
\newcommand{\T}{\mathbb T}
\newcommand{\N}{\mathbb N}

\newcommand{\dd}{\,\mathrm d}
\newcommand{\supp}{\operatorname{supp}}

\newcommand{\Hess}{\operatorname{Hess}}
\newcommand{\diag}{\operatorname{diag}}
\newcommand{\sgn}{\operatorname{sgn}}
\newcommand{\norm}[1]{\left\lVert#1\right\rVert}

\newcommand{\eps}{\varepsilon}
\allowdisplaybreaks[2]

\title[Sharp decay estimates for the DFS]{Sharp Decay estimates for the discrete Fourth-Order Schr\"odinger Equation}

\author{Jiawei Cheng}
\address{Jiawei Cheng: Academy of Mathematics and Systems Sciences, Chinese Academy of Sciences}
\email{\href{mailto:jiawei.cheng.1999@gmail.com}{jiawei.cheng.1999@gmail.com};{jiawei.cheng@amss.ac.cn}}

\date{}
\hypersetup{pdftitle={Sharp decay for DFS in arbitrary dimensions},pdfauthor={Research draft}}

\begin{document}

\begin{abstract}
We establish sharp time-decay estimates for the discrete fourth-order Schr\"odinger equation on lattices of arbitrary dimension, extending the author's previous work \cite{C24}. In higher dimensions, scalar Fourier factorization reduces the
analysis to product estimates for one-dimensional oscillatory integrals. Since this method does not yield the sharp decay exponents in low dimensions, we also use Newton polyhedra to obtain the required uniform estimates. In particular, the sharp decay rate for the biharmonic Schr\"odinger propagator is $|t|^{-d/4}$.
\end{abstract}

\maketitle
% \tableofcontents
% \clearpage

\section{Introduction}
The fourth-order Schr\"{o}dinger equation on Euclidean space takes the form
\begin{equation}\label{equ-continuous fourth-order}
    i\partial_t u + \Delta^2 u + \varepsilon \Delta u  = F(u).
\end{equation}
%where $u:\R \times \R^d \rightarrow \mathbb C$ is a complex-valued function and $\varepsilon \in \{-1,0,1\}$. 
With a power-type nonlinearity, this model was introduced by Karpman \cite{K96} and Shagalov \cite{KS00} to take into account the role of small fourth-order dispersion terms in the propagation of intense laser beams in a bulk medium with Kerr nonlinearity. Since then, \eqref{equ-continuous fourth-order} has been extensively studied, including its dispersive estimates, deterministic and probabilistic well-posedness, scattering, and wave operators. See Ben-Artzi et al. \cite{BKS00}, Pausader \cite{P07,P09}, Miao et al. \cite{MXZ09,MXZ11}, \cite{D21,GW02,HHW06,GG21,YYZ24} and the references therein. 

Analysis and partial differential equations on discrete objects have attracted wide attention in connection with applications such as image processing and neural networks, see \cite{B17,D93,FH10,G18,K11}. Discrete dispersive equations also provide natural numerical models for physical phenomena. On general graphs, see \cite{FT04,HH24,LX19,LX22} for related work. 

The standard lattice graph,
\[
\Z^d = \{(x_1,\cdots,x_d) \in \R^d: x_j \in \Z, j=1,\cdots,d\},
\]
is one of the most classical and important discrete objects. The discrete Laplacian on it is given by 
\[
\Delta f(x)=\sum_{j=1}^d\bigl(f(x+e_j)+f(x-e_j)-2f(x)\bigr),\qquad x\in\Z^d.
\]

On $\Z^d$, it is straightforward to introduce the Fourier transform and its inverse (defined on $\T^d=[-\pi,\pi]^d$), together with the associated identities and inequalities. Thus, under the discrete Fourier transform, the operator $\Delta$ has the symbol shown below:
\[
\mathcal{F}(-\Delta f)(\xi) = \sum_{j=1}^d(2-2\cos\xi_j)\mathcal{F}(f)(\xi):=\omega(\xi)^2 \mathcal{F}(f)(\xi).
\]
While dealing with dispersive equations, one typically begins with the decay estimate for the $l^{\infty}$ norm of the solution in terms of time and the $l^1$ norm of the initial data. This reduces to obtaining pointwise estimates for the corresponding fundamental solution.

For the second-order Schr\"odinger model, Stefanov and Kevrekidis \cite{SK05} observed a special separation-of-variables property from the explicit formula of $\omega(\xi)$ and studied the decay problem completely. For other equations, however, this separation property is unavailable and one must analyze oscillatory integrals in several variables. Nevertheless, Bi, Cheng and Hua \cite{BCH26,BCH23} established uniform estimates using Newton polyhedra in the study of the wave equation, extending the well-known result of Schultz \cite{S98}. Their strategy is quite effective when the number of degenerate variables is no more than four. See \cite{BG17,CQ24,C25,CA23,GLY25,HTY25,HW24,HY19,W25,W26,WZ25,WZ25-2,WZ25-3} for other models, for which the analysis of high-dimensional integrals remains challenging.

Recently in \cite{YZ26}, You and Zhan made an insightful observation that the fundamental solution for the discrete wave equation admits a proper decomposition, reducing the higher-dimensional analysis to uniform estimates for one-dimensional oscillatory integrals. Similarly in \cite{C26}, Chen used an auxiliary scalar Fourier variable to decouple a multidimensional oscillatory integral.

In this article, we consider the discrete fourth-order Schr\"{o}dinger equation (DFS, in short) with Cauchy data
\begin{equation}\label{equ-DFS}
    \left\{
    \begin{aligned}
        & i \partial_t u(x,t) + {\Delta}^2 u(x,t) - \gamma \Delta u(x,t) = F(u(x,t)), \\
        & u(x,0) = f(x),
    \end{aligned}
    \right.
\end{equation}
where $x \in \mathbb{Z}^d$, $t\in \R$ and the parameter $\gamma \in \R$. 
In the following, we pursue sharp decay estimates for the fundamental solution of DFS, i.e. (see \eqref{equ-semigroup solution})
\[
G_{d,\gamma}(x,t) =(2\pi)^{-d}\int_{\T^d}e^{ix\cdot\xi+it(\omega(\xi)^4+\gamma\omega(\xi)^2)}\dd\xi.
\]
We determine the sharp decay exponents for $G_{d,\gamma}$ in arbitrary dimensions by combining the scalar Fourier
factorization (for $d\ge 4$) and Newton polyhedra (for $1\le d \le 3$). It is expected that the decay rate for $G_{d,\gamma}$ depends on both the dimension and the parameter. In general, we partition the parameter space $\R$ into three sets: 
\[
\mathcal E_d=(-\infty,-8d)\cup(0,\infty),\qquad
\mathcal I_d=(-8d,0),\qquad \mathcal B_d=\{-8d,0\}.
\]
Now the main conclusion is as follows.
\begin{theorem}\label{thm-main}
%[All dimensions and all real parameters]
Define \((\beta,p)=(\beta_{d,\gamma},p_{d,\gamma})\) by the following table.
\begin{center}\small
\begin{tabular}{@{}lll@{}}\toprule
Dimension & Parameter & \((\beta,p)\)\\\midrule
\(d=1\)&\(\gamma=0,-8\)&\((1/4,0)\)\\
&otherwise &\((1/3,0)\)\\\addlinespace
\(d=2\)&\(\gamma\in\mathcal E_2\)&\((3/4,0)\)\\
&\(\gamma=-8\)&\((1/2,1)\)\\
&\(\gamma\in \mathcal I_2 \cup \mathcal B_2 \setminus\{-8\}\)&\((1/2,0)\)\\\addlinespace
\(d=3\)&\(\gamma\in\mathcal E_3\)&\((7/6,0)\)\\
&\(\gamma\in\mathcal B_3\)&\((3/4,0)\)\\
&\(\gamma\in\mathcal I_3\)&\((1/2,0)\)\\\addlinespace
\(d=4\)&\(\gamma\in\mathcal E_4\)&\((3/2,1)\)\\
&\(\gamma\in\mathcal B_4\)&\((1,0)\)\\
&\(\gamma\in\mathcal I_4\)&\((1/2,0)\)\\\addlinespace
\(d\ge5\)&\(\gamma\in\mathcal E_d\)&\(((2d+1)/6,0)\)\\
&\(\gamma\in\mathcal B_d\)&\((d/4,0)\)\\
&\(\gamma\in\mathcal I_d\)&\((1/2,0)\)\\\bottomrule
\end{tabular}\end{center}
There exist \(C,c,T>0\) such that
\begin{align*}
\sup_{x\in\Z^d}|G_{d,\gamma}(x,t)|
&\le C(1+|t|)^{-\beta}\log^p(2+|t|),&&t\in\R.
\end{align*}
We point out that both the powers and the logarithmic factors in the table are sharp. Constants may depend on the fixed dimension and parameter, while they are uniform in $x \in \Z^d$.
\end{theorem}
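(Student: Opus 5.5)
The plan is to write the phase as $\phi(\xi)=\omega^4+\gamma\omega^2=s^2+\gamma s$ with $s=\omega(\xi)^2=\sum_j(2-2\cos\xi_j)\in[0,4d]$. Since $\partial_\xi\phi=(2s+\gamma)\nabla s$, the critical points are of two kinds. The first are the critical points of $s$, namely $\xi\in\{0,\pi\}^d$. The second form the level set $\{s=-\gamma/2\}$, which is a hypersurface inside $\T^d$ exactly when $-\gamma/2\in(0,4d)$, that is, when $\gamma\in\mathcal I_d$. The trichotomy $\mathcal E_d,\mathcal I_d,\mathcal B_d$ is then explained as follows. For $\gamma\in\mathcal I_d$ the phase is a function of $s$ with a nondegenerate critical value. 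For $\gamma\in\mathcal B_d$ the critical point $s=0$ or $s=4d$ of $s^2+\gamma s$ coincides with the extremal point $\xi=0$ or $\xi=(\pi,\dots,\pi)$ of $s$, which produces a quartic degeneracy there, $\phi\approx|\xi|^4$. For $\gamma\in\mathcal E_d$ only the lattice-type critical points matter, and the rate is governed by the worst degeneracy of the full phase $x\cdot\xi+t\phi$ uniformly in $v=x/t$.

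\emph{Higher dimensions ($d\ge4$): scalar Fourier factorization.} Following \cite{YZ26,C26}, I would decouple the variables by writing $e^{it(s^2+\gamma s)}$ through a one-dimensional Fourier (Gaussian) representation in an auxiliary variable $\lambda$:
\[
e^{its^2}=c\,|t|^{-1/2}\int_\R e^{-i\lambda^2/(4t)}e^{i\lambda s}\dd\lambda .
\]
This is to be used with a smooth cutoff, since $s$ is bounded. The result is
\[
G_{d,\gamma}(x,t)=c|t|^{-1/2}\int_\R e^{-i\lambda^2/4t}\prod_{j=1}^d J(x_j,\lambda+\gamma t)\dd\lambda,
\qquad J(y,\mu)=\frac1{2\pi}\int_{-\pi}^{\pi}e^{iy\theta+i\mu(2-2\cos\theta)}\dd\theta .
\]
Each $J$ is a Bessel-type integral satisfying $|J(y,\mu)|\lesssim\min(1,|\mu|^{-1/3})$ uniformly in $y$, and $|\mu|^{-1/2}$ away from the fold $|y|\approx2|\mu|$. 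Inserting these bounds gives:
\begin{itemize}
\item For $\gamma\in\mathcal I_d$, only the prefactor $|t|^{-1/2}$ survives after integrating $\prod_j|J|$, which is integrable for $d\ge4$. This yields $t^{-1/2}$.
\item For $\gamma\in\mathcal E_d$, we have $|\lambda+\gamma t|\gtrsim|t|$ on the relevant region. The product contributes $|t|^{-d/3}$ from the folds. Combining with the prefactor and using the stationary-phase gain in $\lambda$ gives $|t|^{-(2d+1)/6}$. In $d=4$ the $\lambda$-integral of $\prod_j|\lambda+\gamma t|^{-1/3}$ against the truncated Gaussian is borderline and produces the factor $\log t$.
\item For $\gamma\in\mathcal B_d$, the region $\lambda+\gamma t\approx0$ (resp.\ near $4d$) must be handled with the quadratic scaling $|J|\lesssim|\mu|^{-1/2}$ near the extreme. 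This gives $|t|^{-d/4}$.
\end{itemize}
I expect the real work here to be making the $\lambda$-integral converge, since $s$ is bounded and the Gaussian representation must be truncated. I would also need to sum the fold/non-fold regimes of $J$ carefully rather than just take the worst bound.

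\emph{Low dimensions ($d\le3$): Newton polyhedra.} After localizing near each critical point of $x\cdot\xi+t\phi$, I would compute the Taylor expansion of $\phi$ at $\xi_0$. After adapted coordinates it reduces to normal forms whose Newton distance $\delta$ gives the rate $t^{-1/\delta}$ via Varchenko's theorem, made uniform in the linear perturbation $x/t$ as in \cite{BCH26,BCH23}.
\begin{itemize}
\item In $d=1$, the phase is $\phi(\theta)=(2-2\cos\theta)^2+\gamma(2-2\cos\theta)$. We have $\phi'''\ne0$ at inflection points generically, giving $1/3$. For $\gamma\in\{0,-8\}$ the point $\theta=0$ or $\pi$ gives $\theta^4$, hence $1/4$.
\item In $d=2$ and $d=3$, I would classify the degenerate points where $\Hess\phi$ has corank $\ge1$ (folds, cusps, and $D_4$-type points), computing the Newton polyhedron distances. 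This yields $3/4$ and $7/6$ in $\mathcal E$. The $\log t$ for $d=2$, $\gamma=-8$ comes from a Newton polyhedron whose distance is attained on a noncompact edge or with multiplicity $1$. In $\mathcal I_d$ the $1/2$ comes from the $\lambda$-type level-set degeneracy, where $\phi$ depends on $s$ only through a single direction.
\end{itemize}
The hard step is the uniformity in $x$: degenerate points move with $x/t$. I would handle this by the Karpushkin/Varchenko stability argument, checking that the height is not lowered under linear perturbations.

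\emph{Sharpness.} I would pick $x=tv_0$ with $v_0$ the gradient at the worst critical point. Applying exact asymptotics (stationary phase or Varchenko's leading term with nonzero coefficient) then shows that the lower bound matches.
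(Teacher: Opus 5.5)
Your architecture matches the paper's for interior and exterior parameters: factorization for $d\ge4$, Newton polyhedra for $d\le3$. There is, however, a genuine gap at the endpoints $\gamma\in\mathcal B_d$ with $d\ge3$.

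\textbf{Endpoints.} Your factorized formula cannot give $|t|^{-d/4}$ once absolute values are taken inside the $\lambda$-integral. At $\mu=\lambda+\gamma t\approx0$ the $\lambda$-multiplier has modulus comparable to $|t|^{-1/2}$ (exactly $c|t|^{-1/2}$ for your Gaussian; with a cutoff $a(s)$ the stationary point still lies where $a=1$). Also $\prod_j|J(0,\mu)|\gtrsim1$ for $|\mu|\le1$. So any such bound is at least $c|t|^{-1/2}$, which is worse than $|t|^{-d/4}$ for $d\ge3$. Your heuristic $|t|^{-1/2}\int_{|\mu|\lesssim|t|^{1/2}}|\mu|^{-d/2}\,d\mu\sim|t|^{-d/4}$ is valid only for $d<2$. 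For $d\ge3$ the $\mu$-integral is dominated by $|\mu|\lesssim1$, and the rate $|t|^{-d/4}$ lives entirely in cancellations of the $\lambda$-integral. For instance, the would-be $|t|^{-1/2}$ coefficient $\int_\R\prod_jJ(0,\mu)\,d\mu$ equals $2\pi$ times the density of $\omega^2$ at $0$, which vanishes.

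Your low-dimensional fallback (Karpushkin--Varchenko stability) also fails here. At $\xi=0$ or $\pi\mathbf 1$ the phase is $|\xi|^4+O(|\xi|^6)$ with corank $d\ge3$, which is outside the two-variable theory. Under general small analytic perturbations the exponent really drops to $1/2$, since $-\eps|\xi|^2$ creates a critical sphere. So you must use that the perturbation $v\cdot\xi$ is linear.

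The paper closes this by splitting the scalar cutoff as $a=a_{\rm low}+a_{\rm high}$ at the extreme value of $s$:
\begin{itemize}
\item On $\supp a_{\rm high}$ one has $|q_\gamma'|\ge c>0$, so the exterior argument applies and decays faster than $|t|^{-d/4}$.
\item $a_{\rm low}\circ S$ lives near the corner. There Lemma~\ref{lem-quartic} decomposes dyadically in $|\xi|\sim r$ and rescales to $f_r(y)=r^{-4}f(ry)$, with $D^2f_r\ge cI$ on $|y|\sim1$ independent of the linear term. Uniform nondegenerate stationary phase gives $r^d\min\{1,(|t|r^4)^{-d/2}\}$, which sums to $|t|^{-d/4}$ uniformly in $v$.
\end{itemize}

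\textbf{Sharpness.} Your sharpness sketch is missing its two key devices.
\begin{itemize}
\item Localization. You must evaluate at a lattice point near $tv_0$, and the full torus integral collects every critical point. The paper handles this with Lemma~\ref{lem-multiplier}: multipliers $b_t(L_t^{-1}(\xi-\xi_*))$ with $\|L_t\|\le C$ have uniformly $\ell^1$ Fourier coefficients, so localized kernels are bounded by $\sup_x|G_{d,\gamma}|$.
\item A lower-bound mechanism at the exterior corner. Varchenko's theorem only identifies the leading term when the Newton distance exceeds $1$. At the exterior corner for $d\ge3$ the exponents $7/6$, $3/2$, $(2d+1)/6$ all exceed $1$, so the distance is below $1$. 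The paper instead uses the small-phase scaling bound $\gtrsim|\det L_t|$ with $L_t=t^{-1/3}P_\perp+t^{-1/2}P_\parallel$. For the $d=4$ logarithm it uses a time-dependent corrector multiplier that cancels the remainder and rounding term exactly, reducing to $\int e^{i\Lambda uvw}$ (Lemma~\ref{lem-xyz}).
\end{itemize}

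\textbf{Smaller points.}
\begin{itemize}
\item The $d=4$ logarithm in the upper bound does not come from $\prod_j|\lambda+\gamma t|^{-1/3}$, which on $|\mu|\sim|t|$ integrates to $|t|^{-1/3}$ with no log. It comes from the Airy transition profile $|\mu|^{-1/3}(1+|y\pm2\mu|/|\mu|^{1/3})^{-1/4}$ of Lemma~\ref{lem-B}. Four such envelopes are borderline integrable over a rescaled interval of length $|t|^{2/3}$ (Lemma~\ref{lem-rho}). Your two-regime description of $J$ is too coarse to produce this.
\item In $d=3$ your list of folds, cusps and $D_4$ points is incomplete. $A_5$ points with reduced phase $\sim\tau^6$ do occur. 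You must therefore prove corank $\le2$ and reduced order $\le6$ everywhere, which the paper does via the auxiliary phase and a Vandermonde argument.
\end{itemize}
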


%\begin{corollary}[The pure fourth-order model]
%For every \(d\ge1\),
%\[
%\norm{e^{it\Delta^2}}_{\ell^1(\Z^d)\to\ell^\infty(\Z^d)}
%\asymp_d |t|^{-d/4}\qquad(|t|\ge T_d).
%\]
%\end{corollary}
\begin{remark}
Theorem \ref{thm-main} is sharp in the sense that
\[
\sup_{x\in\Z^d}|G_{d,\gamma}(x,t)| \ge c_{d,\gamma}|t|^{-\beta}\log^p|t|, \qquad |t|\ge T.
\]
\end{remark}

\begin{remark}
    When $d=1,2$, the author has proved such estimates for DFS with all parameters. In the proof, he established uniform estimates for the related oscillatory integral under analytic perturbations of the phase, which is a stronger conclusion than Theorem \ref{thm-main} since the perturbation in $G_{d,\gamma}$ is linear. Meanwhile, this theorem also provides a correction to Theorems 1.1 and 3.8 of \cite{C24}. 
\end{remark}

\begin{remark}
The approach of You-Zhan and the method of Bi-Cheng-Hua are perfectly complementary to each other. For an oscillatory integral, the index given by scalar Fourier factorization is sharp only when the dimension is high, while the uniform estimate by Newton polyhedra is always available when there are relatively few variables.
\end{remark}

The paper is organized as follows. We exhibit the detailed process of scalar Fourier factorization and recall a few basic estimates in Section \ref{sec-factor}. Section \ref{sec-high} and \ref{sec-low} apply two different methods to high ($d\ge 4$) and low ($1 \le d \le 3$) dimensions, respectively. Finally, we prove the sharpness of the main theorem in Section \ref{sec-sharp}. The \hyperref[sec-app]{Appendix} collects facts on the discrete setting and uniform estimates.

%Following the notation of the supplied DFS manuscript \cite{C24}, set
%\begin{align}
%\omega(\xi)^2&=\sum_{j=1}^d(2-2\cos\xi_j),&
%\phi_\gamma(\xi)&=\omega(\xi)^4+\gamma\omega(\xi)^2,\label{eq-phi}\\
%G_{d,\gamma}(x,t)&=(2\pi)^{-d}\int_{\T^d}e^{ix\cdot\xi+it\phi_\gamma(\xi)}\dd\xi,&
%\varphi(v,\xi)&=v\cdot\xi+\phi_\gamma(\xi).\label{eq-kernel}
%\end{align}
%Then \(u(t)=G_{d,\gamma}(t)*u(0)\). 
%Constants may depend on the fixed dimension and parameter. No uniformity across the parameter boundaries is asserted.

\section{Scalar Fourier Factorization}\label{sec-factor}
$G_{d,\gamma}$ is obtained by applying the discrete Fourier transform to \eqref{equ-DFS} with $F\equiv 0$, solving the ODE and taking the inverse transform. See Appendix \ref{ssec-disc set} for the definitions and the detailed calculation. We then write $G_{d,\gamma}$ in the following way.

Set
\[
S(\xi)=-\sum_{j=1}^d\cos\xi_j \in [-d,d],\qquad
q_\gamma(s)=(2d+2s)^2+\gamma(2d+2s).
%~~~~~\phi_\gamma(\xi) =\omega(\xi)^4+\gamma\omega(\xi)^2.
\]
Note that $q_\gamma\circ S$ indeed recovers the dispersive relation of \eqref{equ-DFS}. We also fix a cutoff \(a\in C_c^\infty(\R)\), such that \(a=1\) on \([-d,d]\). Hence we have
\[
G_{d,\gamma}(x,t)=(2\pi)^{-d}\int_{\T^d}e^{ix\cdot\xi+itq_\gamma(S(\xi))}a(S(\xi))\dd\xi.
\]
Then we observe that $S$ has the separation-of-variables form, so we identify the term containing $S$, i.e.
\[
F_t(s)=a(s)e^{itq_\gamma(s)},
\]
as the Fourier inverse of its Fourier transform, which is
\[
F_t(S(\xi)) = (2\pi)^{-1}\int_{\R}e^{i\eta S(\xi)}\widehat F_t(\eta)\dd \eta.
\]
Define
\[
B_m(\eta)=\int_{-\pi}^{\pi}e^{im\theta-i\eta\cos\theta}\dd\theta.
\]
Then Fubini's theorem gives
\[
\begin{aligned}
    G_{d,\gamma}(x,t)
    & =(2\pi)^{-d}\int_{\T^d}e^{ix\cdot\xi}F_t(S(\xi))\dd\xi\\
    & =\frac1{(2\pi)^{d+1}}\int_\R\widehat F_t(\eta)
    \dd \eta \int_{\T^d}e^{i(x\cdot \xi+\eta S(\xi))}\dd \xi\\
    & =\frac1{(2\pi)^{d+1}}\int_\R\widehat F_t(\eta)\prod_{j=1}^dB_{x_j}(\eta)\dd\eta.
\end{aligned}
\]

We now estimate each part in the last formula. Keep in mind that after the factorization, we only need to deal with integrals with one variable, making the analysis much easier.

The first lemma concerns the scalar Fourier estimate.
\begin{lemma}\label{lem-Ft}
For fixed \(a\) and \(|t|\ge1\),
    \begin{equation}\label{eq-Ftbound}
    \sup_{\eta\in\R}|\widehat F_t(\eta)|\le C_a|t|^{-1/2}.
    \end{equation}
If \(|q_\gamma'|\ge c_0>0\) on \(\supp a\), there are \(0<c<C\) such that, for every \(N\),
    \begin{equation}\label{eq-tail}
    \int_{\{|\eta|\notin[c|t|,C|t|]\}}|\widehat F_t(\eta)|\dd\eta
    \le C_{a,N}|t|^{-N}.
    \end{equation}
\end{lemma}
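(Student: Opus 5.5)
We write $\widehat F_t(\eta)=\int_\R a(s)e^{i(tq_\gamma(s)-\eta s)}\dd s$, a one-dimensional oscillatory integral with compactly supported smooth amplitude and phase $\psi(s)=tq_\gamma(s)-\eta s$. The key observation is that $q_\gamma$ is a quadratic polynomial in $s$ with leading coefficient $4$:
\[
q_\gamma(s)=4s^2+(8d+2\gamma)s+\mathrm{const},\qquad q_\gamma''\equiv 8 .
\]
Hence $\psi''(s)=8t$ for every $\eta$.

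For \eqref{eq-Ftbound} we apply van der Corput's lemma with the second derivative and the amplitude $a$. Since $|\psi''|\ge 8|t|$, this gives
\[
|\widehat F_t(\eta)|\le C\bigl(\|a\|_{L^\infty}+\|a'\|_{L^1}\bigr)|t|^{-1/2}
\]
uniformly in $\eta$. Alternatively, completing the square makes the integral a Fresnel integral convolved with a Schwartz function, which gives the same bound. This step is routine.

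For \eqref{eq-tail} we use non-stationary phase. On $\supp a$ we have $|q_\gamma'|\ge c_0$, and $|q_\gamma'|\le M$ because the support is compact. Choose $c=c_0/2$ and $C=2M$.

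If $|\eta|<c|t|$, then on $\supp a$
\[
|\psi'(s)|=|tq_\gamma'(s)-\eta|\ge |t|c_0-|\eta|\ge \tfrac12|t|c_0+\bigl(\tfrac12 c_0|t|-|\eta|\bigr).
\]
In particular $|\psi'|\gtrsim |t|$. If $|\eta|>C|t|$, then $|\psi'|\ge|\eta|-M|t|\ge |\eta|/2$. In both cases $|\psi'|\gtrsim |t|+|\eta|$ on $\supp a$.

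We then integrate by parts $N+2$ times using the operator $L=(i\psi')^{-1}\partial_s$. Since $\psi''=8t$ and all higher derivatives of $\psi$ vanish, each application of the transpose $L^{T}$ produces terms bounded by $C_N(|t|+|\eta|)^{-1}$. The factors $\psi''/\psi'^2$ are bounded by $|t|/(|t|+|\eta|)^2\le (|t|+|\eta|)^{-1}$. This gives
\[
|\widehat F_t(\eta)|\le C_{a,N}(|t|+|\eta|)^{-N-2}
\]
on the excluded set. Integrating over $\eta$ yields $\lesssim |t|^{-N-1}$, which proves \eqref{eq-tail}.

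The only mild obstacle is bookkeeping the derivatives in the integration by parts so that the bound is uniform jointly in $(t,\eta)$. Because $\psi$ is quadratic, the derivatives $\psi^{(k)}$ vanish for $k\ge3$, which makes this straightforward.
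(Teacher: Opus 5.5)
Your proof is correct and follows the paper's argument: with $\psi''\equiv 8t$, the second-derivative van der Corput lemma gives the uniform $|t|^{-1/2}$ bound, and outside a constant-ratio annulus you have $|tq_\gamma'(s)-\eta|\gtrsim |t|+|\eta|$ on $\supp a$, so repeated integration by parts followed by integration in $\eta$ gives the tail estimate. Your explicit choice $c=c_0/2$, $C=2M$ and your observation that $\psi^{(k)}=0$ for $k\ge 3$ supply the bookkeeping that the paper leaves implicit.
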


\begin{proof}
The scalar phase is \(tq_\gamma(s)-\eta s\), whose second derivative is \(8t\). The second-derivative van der Corput lemma (Theorem \ref{lemma-van der corput}) gives \eqref{eq-Ftbound}.
If \(q_\gamma'\) is also bounded away from zero on the fixed support, an enlarged constant-ratio annulus has the property that
\(|tq_\gamma'(s)-\eta|\gtrsim|t|+|\eta|\) outside it.
Repeated integration by parts yields
\(|\widehat F_t(\eta)|\le C_L(|t|+|\eta|)^{-L}\).
Integration in \(\eta\), with \(L\) sufficiently large, proves \eqref{eq-tail}.
\end{proof}

Then we deal with $B_m$ and the product integrability. 
The next two estimates are Lemmas 4.1 and 4.2 of \cite{YZ26}, so we omit their proofs.
\begin{lemma}\label{lem-B}
Let \(\rho(u)=(1+|u|)^{-1/4}\). For all \(m\in\Z\) and \(|\eta|\ge1\),
    \begin{equation}\label{eq-B}
        |B_m(\eta)|\le C\left[|\eta|^{-1/2}
        +|\eta|^{-1/3}\sum_{\sigma=\pm1}
        \rho\left(\frac{m+\sigma\eta}{|\eta|^{1/3}}\right)\right].
    \end{equation}
\end{lemma}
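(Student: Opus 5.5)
We prove \eqref{eq-B} by a stationary phase analysis of the one-variable integral $B_m(\eta)=\int_{-\pi}^{\pi}e^{i\Phi(\theta)}\dd\theta$ with phase $\Phi(\theta)=m\theta-\eta\cos\theta$. By the symmetries $\theta\mapsto-\theta$ and $\theta\mapsto\theta+\pi$ (which change $m$ to $-m$, respectively $\eta$ to $-\eta$ at the cost of a unimodular factor), we may assume $\eta\ge1$. Since the integrand is $2\pi$-periodic, there are no boundary terms. The derivatives are
\[
\Phi'(\theta)=m+\eta\sin\theta,\qquad \Phi''(\theta)=\eta\cos\theta,\qquad \Phi'''(\theta)=-\eta\sin\theta .
\]
At every point, either $|\Phi''|\ge\eta/\sqrt2$ or $|\Phi'''|\ge\eta/\sqrt2$.

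\emph{Step 1 (the two easy regimes).} If $|m|\ge2\eta$, then $|\Phi'|\ge|m|/2\gtrsim\eta$ everywhere. Integration by parts on the circle gives $|B_m|\lesssim\eta^{-1}$, which is better than needed. In general, a smooth partition of unity splits the circle into arcs where $|\cos\theta|\ge 1/2$ and neighbourhoods of $\theta=\pm\pi/2$. On the first type of arc, the second-derivative van der Corput lemma (Theorem~\ref{lemma-van der corput}, with the smooth cutoff handled in the usual way by total variation) gives $O(\eta^{-1/2})$. This is the first term of \eqref{eq-B}.

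\emph{Step 2 (degenerate points $\theta=\pm\pi/2$).} Near $\theta_\sigma=-\sigma\pi/2$ for $\sigma=\pm1$, we have $\Phi'(\theta_\sigma)=m-\sigma\eta$, $\Phi''(\theta_\sigma)=0$ and $|\Phi'''|\approx\eta$. Write $\theta=\theta_\sigma+u$ with $|u|\le\delta$. Then
\[
\Phi'(\theta)=(m-\sigma\eta)+\sigma\eta(1-\cos u)\approx(m-\sigma\eta)+\sigma\tfrac{\eta}{2}u^2 .
\]
The labelling convention $m+\sigma\eta$ in \eqref{eq-B} is recovered by renaming $\sigma\mapsto-\sigma$. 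The third-derivative van der Corput bound gives $O(\eta^{-1/3})$ uniformly in $m$. This already proves \eqref{eq-B} whenever $\lambda:=|m-\sigma\eta|\lesssim\eta^{1/3}$, since then $\rho\gtrsim1$.

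\emph{Step 3 (the main obstacle: the extra decay factor).} When $\lambda\gg\eta^{1/3}$, we must gain the factor $\rho(\lambda/\eta^{1/3})\approx(\lambda\eta^{-1/3})^{-1/4}$.

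If the sign of $m-\sigma\eta$ agrees with that of $\sigma\eta u^2$, then $|\Phi'|\gtrsim\lambda+\eta u^2$ on the whole window. The first-derivative van der Corput lemma (monotonicity holds on each half of the window) then gives $O(\lambda^{-1})\le\eta^{-1/3}(\lambda\eta^{-1/3})^{-1}$.

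If the signs are opposite, there are two nondegenerate stationary points $u_\pm=\pm\sqrt{2\lambda/\eta}\,(1+o(1))$. These lie in the window provided $\lambda\le c\eta$; otherwise we are back in Step 1. At these points $|\Phi''|\approx\eta|u_\pm|\approx\sqrt{\lambda\eta}$. Split the window dyadically in $|u|$ relative to $r=\sqrt{\lambda/\eta}$:
\begin{itemize}
\item On $|u|\ge r/2$, we have $|\Phi''|\gtrsim\sqrt{\lambda\eta}$, so the second-derivative lemma gives $(\lambda\eta)^{-1/4}=\eta^{-1/3}(\lambda\eta^{-1/3})^{-1/4}$. This is exactly the $\rho$ term.
\item On $|u|\le r/4$, we have $|\Phi'|\gtrsim\lambda$, and with monotone $\Phi'$ this gives $O(\lambda^{-1})$, which is smaller.
\end{itemize}

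Summing the contributions of the two degenerate points $\sigma=\pm1$ and the nondegenerate arcs yields \eqref{eq-B}. The delicate part is keeping all constants uniform in $m$. This uses only the fact that $\Phi'''$ and the relevant $|\Phi''|$ lower bounds depend on $\eta$ alone, while $m$ enters only through the linear shift $\lambda$.
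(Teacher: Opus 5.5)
Your proof is correct, but the paper takes no route of its own here: it omits the proof and imports the estimate as Lemma~4.1 of \cite{YZ26}. The substitution $\theta=\tau+\pi/2$ gives $B_m(\eta)=2\pi(-i)^mJ_m(\eta)$, so the statement is the classical uniform Bessel bound. What you supply is the standard self-contained van der Corput proof of that bound:
\begin{itemize}
\item arcs with $|\cos\theta|$ bounded below give $\eta^{-1/2}$;
\item the two inflection points $\theta=\pm\pi/2$ give $\eta^{-1/3}$ uniformly in $m$;
\item outside the Airy regime $\lambda=|m\mp\eta|\lesssim\eta^{1/3}$, splitting at scale $r=\sqrt{\lambda/\eta}$ gives $(\lambda\eta)^{-1/4}=\eta^{-1/3}(\lambda\eta^{-1/3})^{-1/4}$.
\end{itemize}
The citation buys brevity. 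Your version makes Section~2 depend only on the van der Corput lemma in the appendix, plus its first-derivative form under monotonicity. You verify that monotonicity correctly via $\Phi''=\sigma\eta\sin u$ on each half-window.

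One statement needs correcting. In the opposite-sign case, $\lambda>c\eta$ does not put you back in Step~1: for $c\eta<\lambda<3\eta$ you still have $|m|<2\eta$. This is harmless, because your dyadic split never uses that the stationary points $u_\pm$ lie in the window. Take a cutoff $\chi(u/r)$ equal to $1$ on $|u|\le r/4$ and supported in $|u|\le r/2$. On the inner piece, $|\Phi'|\ge\lambda-\eta u^2/2\ge\tfrac78\lambda$, with $\Phi'$ monotone on each half. On the outer piece (possibly empty), $|\Phi''|=\eta|\sin u|\ge\eta r/(2\pi)$. This holds for every $\lambda$ with $|m|<2\eta$.

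The same overlapping cutoff also closes the literal gap between your regions $|u|\ge r/2$ and $|u|\le r/4$. Its total variation is bounded uniformly in $r$, so the van der Corput constants stay uniform in $m$. Similarly, the partition into arcs and windows needs windows $|u|\le\delta$ with $\pi/3<\delta<\pi/2$. That way $|\Phi'''|\ge\eta\cos\delta$ on the windows, and the arcs retain a lower bound on $|\cos\theta|$.
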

For the specific function $\rho$, we have 
\begin{lemma}\label{lem-rho}
    %[Products of translated envelopes]
If an interval \(J\) has length at most \(L\ge1\), then, for arbitrary \(a_j\in\R\) and \(\sigma_j=\pm1\),
\[
\int_J\prod_{j=1}^k\rho(\sigma_ju+a_j)\dd u\lesssim_k
\begin{cases}
L^{1-k/4},&1\le k<4,\\
\log(2+L),&k=4,\\
1,&k>4.
\end{cases}
\]
\end{lemma}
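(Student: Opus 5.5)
We need to prove Lemma \ref{lem-rho}, the bound for $\int_J\prod_{j=1}^k\rho(\sigma_ju+a_j)\dd u$.

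First we make two reductions. Since $\rho(-v)=\rho(v)$, we have $\rho(\sigma_ju+a_j)=\rho(u+\sigma_ja_j)$. Relabel $b_j=-\sigma_ja_j$, so each factor becomes $\rho(u-b_j)=(1+|u-b_j|)^{-1/4}$. The integrand is then a product of $k$ translated envelopes, each decaying like distance$^{-1/4}$ from its center $b_j$.

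The key step is a Hölder inequality with equal exponents $k$:
\[
\int_J\prod_{j=1}^k\rho(u-b_j)\dd u\le\prod_{j=1}^k\Bigl(\int_J\rho(u-b_j)^k\dd u\Bigr)^{1/k}.
\]
This reduces the problem to a single translated power $\int_J(1+|u-b|)^{-k/4}\dd u$. We bound it uniformly in $b$ by symmetric decreasing rearrangement. The function $u\mapsto(1+|u-b|)^{-k/4}$ is symmetric and decreasing about $b$. Among intervals of length at most $L$, the integral is therefore maximized by the interval centered at $b$, which gives
\[
\int_J(1+|u-b|)^{-k/4}\dd u\le 2\int_0^{L/2}(1+v)^{-k/4}\dd v.
\]
We now evaluate this elementary integral in three cases.

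\begin{itemize}
\item For $k<4$, the exponent satisfies $k/4<1$, and the integral is $\lesssim_k (1+L)^{1-k/4}\lesssim L^{1-k/4}$, using $L\ge1$.
\item For $k=4$, the integral is $2\log(1+L/2)\le2\log(2+L)$.
\item For $k>4$, the integral is bounded by $2\int_0^\infty(1+v)^{-k/4}\dd v=\frac{8}{k-4}$, a constant depending only on $k$.
\end{itemize}

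In each case the bound is uniform in $b_j$. Taking the product of the $k$ factors, each raised to the power $1/k$, gives exactly the claimed bound: $L^{1-k/4}$, $\log(2+L)$, or $1$ respectively.

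The only potential obstacle is that the integrand's factors may be centered at widely separated points. Hölder's inequality sidesteps this, since each factor's $k$-th power is controlled independently of where it is centered.
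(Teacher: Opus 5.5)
Your proof is correct and complete. The paper gives no argument of its own here: it quotes the estimate as Lemma~4.2 of \cite{YZ26} and omits the proof, so there is no in-text argument to compare against.

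Your route is a clean, self-contained substitute:
\begin{enumerate}
\item Use the evenness of $\rho$ to rewrite each factor as $\rho(u-b_j)$.
\item Apply H\"older's inequality with all $k$ exponents equal to $k$.
\item Bound each single-centre integral $\int_J(1+|u-b|)^{-k/4}\dd u$ by the integral over the interval of length $L$ centred at $b$, namely $2\int_0^{L/2}(1+v)^{-k/4}\dd v$.
\end{enumerate}

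The main advantage of this route is that H\"older decouples the translations. Uniformity in the $a_j$ and $\sigma_j$ is therefore automatic, and you need no case analysis according to the relative positions of the centres.

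The argument also loses nothing. When all $b_j$ coincide with the midpoint of $J$ and $|J|=L$, the left-hand side equals $2\int_0^{L/2}(1+v)^{-k/4}\dd v$ exactly. So the three rates you obtain are the sharp ones, uniformly in the translation parameters.

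A minor remark: you could avoid the rearrangement step by splitting $J$ at $b$ and using monotonicity on each side. That gives $2\int_0^{L}$ instead of $2\int_0^{L/2}$, which works equally well.
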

To close this section, we prove the following dyadic product bounds.
\begin{proposition}\label{prop-annulus}
For \(R\ge1\) and arbitrary \(x\in\Z^d\),
\begin{equation}\label{eq-annulus}
    \int_{R\le|\eta|\le2R}\prod_{j=1}^d|B_{x_j}(\eta)|\dd\eta\lesssim_d H_d(R),
    \qquad
    H_d(R)=\begin{cases}
    R^{1-d/2},&d=1,2,3,\\
    R^{-1}\log(2+R),&d=4,\\
    R^{-(d-1)/3},&d\ge5.
    \end{cases}
\end{equation}
In particular, for \(d\ge3\),
\begin{equation}\label{eq-globalproduct}
    \sup_{x\in\Z^d}\int_\R\prod_{j=1}^d|B_{x_j}(\eta)|\dd\eta<\infty.
\end{equation}
\end{proposition}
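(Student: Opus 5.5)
We insert the bound of Lemma \ref{lem-B} into the product and expand it. On the annulus $R\le|\eta|\le 2R$ we have $|\eta|\sim R$, so each factor satisfies
\[
|B_{x_j}(\eta)|\lesssim R^{-1/2}+R^{-1/3}\sum_{\sigma=\pm1}\rho\Bigl(\frac{x_j+\sigma\eta}{|\eta|^{1/3}}\Bigr).
\]
Expanding the product over $j$ gives $3^d$ terms. Each term is indexed by a subset $A\subset\{1,\dots,d\}$ of size $k$ (the factors where we keep an envelope) together with signs $\sigma_j$. Such a term is bounded by
\[
R^{-(d-k)/2}R^{-k/3}\int_{R\le|\eta|\le2R}\prod_{j\in A}\rho\Bigl(\frac{x_j+\sigma_j\eta}{|\eta|^{1/3}}\Bigr)\dd\eta .
\]

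To handle this integral, we first remove the $|\eta|^{1/3}$ in the denominator. Since $|\eta|^{1/3}\sim R^{1/3}$ on the annulus and $\rho(u)\sim\rho(\lambda u)$ uniformly for $\lambda\in[1/2,2]$, we can replace $|\eta|^{1/3}$ by $R^{1/3}$ at the cost of a constant. Next we restrict to $\eta>0$ (the case $\eta<0$ is symmetric) and substitute $u=\eta/R^{1/3}$. This gives $\dd\eta=R^{1/3}\dd u$, and $u$ ranges over an interval of length $L\sim R^{2/3}$. The integrand becomes $\prod_{j\in A}\rho(\sigma_ju+a_j)$ with $a_j=x_j/R^{1/3}$. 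Lemma \ref{lem-rho} then bounds the integral by $R^{1/3}$ times $L^{1-k/4}$, $\log(2+L)$, or $1$, according to whether $k<4$, $k=4$ or $k>4$. For $k=0$ the integral is trivially $\lesssim R$.

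It remains to compute the exponents and take the worst case over $k$. The contribution of a term with $|A|=k$ is:
\begin{itemize}
\item for $k<4$: $R^{-(d-k)/2-k/3+1/3+(2/3)(1-k/4)}=R^{1-d/2}$;
\item for $k=4$: $R^{-(d-4)/2-4/3+1/3}\log(2+R)=R^{1-d/2}\log(2+R)$;
\item for $k>4$: $R^{-(d-k)/2-k/3+1/3}=R^{1/3-d/2+k/6}$.
\end{itemize}
The $k=0$ term equals $R^{1-d/2}$, consistent with the first line.

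We now maximize over $0\le k\le d$ in each dimension range.
\begin{itemize}
\item For $d\le3$ only $k<4$ occurs, and every term gives $R^{1-d/2}$.
\item For $d=4$ the $k=4$ term dominates and gives $R^{-1}\log(2+R)$.
\item For $d\ge5$ the exponent $1/3-d/2+k/6$ increases with $k$, so the maximum is at $k=d$, which gives $R^{1/3-d/3}=R^{-(d-1)/3}$. This is at least the $k\le4$ contributions $R^{1-d/2}\log R$, because $-(d-1)/3>1-d/2$ when $d>4$.
\end{itemize}
This proves \eqref{eq-annulus}. The only delicate point is the uniformity of the rescaling step: the constants in Lemma \ref{lem-rho} must be independent of the shifts $a_j$, and the comparison $\rho(u/\lambda)\sim\rho(u)$ must be uniform. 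Both hold.

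For \eqref{eq-globalproduct} with $d\ge3$, we split $\R$ into $|\eta|\le1$ and dyadic annuli. On $|\eta|\le1$ we use the trivial bound $|B_m|\le2\pi$. On the annuli $R=2^n$ we sum \eqref{eq-annulus}. For $d\ge5$ the exponent $-(d-1)/3$ is negative, and for $d=4$ we have $R^{-1}\log R$, so the geometric sums converge.

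The case $d=3$ needs more care, because $H_3(R)=R^{-1/2}$ gives a convergent geometric series only after checking the exponent. Here $1-3/2=-1/2<0$, so the sum over $n$ does converge and the bound is uniform in $x$. The conclusion \eqref{eq-globalproduct} follows.
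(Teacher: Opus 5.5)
Your proof is correct and follows the same route as the paper. You expand the bound of Lemma~\ref{lem-B}, rescale $\eta=R^{1/3}u$ so that each term with $k$ turning-point factors is controlled by Lemma~\ref{lem-rho} on an interval of length $\sim R^{2/3}$, take the worst $k$ ($k=d$ for $d\ge5$, $k=4$ for $d=4$), and sum over dyadic annuli with the trivial bound on $|\eta|\le1$, just as the paper does; your explicit comparison $\rho(\lambda u)\sim\rho(u)$ for $\lambda\in[1/2,2]$, used to replace $|\eta|^{1/3}$ by $R^{1/3}$, makes precise a step the paper leaves implicit.
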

\begin{proof}
Expand the product of the bounds in Lemma \ref{lem-B}. 
Applying Lemma \ref{lem-rho} shows that, after the substitution, \(\eta=R^{1/3}u\), each term containing \(k\) turning-point factors and \(d-k\) regular factors is bounded by 
%a constant times
\[
R^{-(d-k)/2-k/3+1/3}\int_{J_R}\prod_{\ell=1}^k\rho(\sigma_\ell u+a_\ell)\dd u,
\qquad |J_R|\lesssim R^{2/3}.
\]
For \(k<4\), this is \(O(R^{1-d/2})\), including \(k=0\) by direct integration.
For \(k=4\), it is \(O(R^{1-d/2}\log(2+R))\).
For \(k>4\), it is \(O(R^{-d/2+k/6+1/3})\), with the slowest exponent \(-(d-1)/3\) at \(k=d\).
When \(d\ge5\), the first two types decay faster. This proves \eqref{eq-annulus}.
Use the trivial bound on \(|\eta|\le1\), and sum over \(R=2^j\). The sum converges for every \(d\ge3\), uniformly in all translation parameters \(x_j\), proving \eqref{eq-globalproduct}.
\end{proof}

\section{High-dimensional upper bounds}\label{sec-high}

\subsection{Exterior and interior parameter intervals}
Recall that we divide the domain of parameters into three parts:
\[
\mathcal E_d=(-\infty,-8d)\cup(0,\infty),\qquad
\mathcal I_d=(-8d,0),\qquad \mathcal B_d=\{-8d,0\}.
\]
By direct calculation, we get
\begin{equation}\label{eq-qder}
    q_\gamma'(s)=8d+8s+2\gamma,\qquad q_\gamma''(s)=8.
\end{equation}
Therefore for the first part, one can choose \(a=1\) near \([-d,d]\) while satisfying the second hypothesis in Lemma \ref{lem-Ft}.
\begin{proposition}\label{prop-external}
For \(\gamma\in\mathcal E_d\) and \(|t|\ge1\),
\[
\sup_x|G_{d,\gamma}(x,t)|\lesssim
\begin{cases}
|t|^{-(d-1)/2},&d=2,3,\\
|t|^{-3/2}\log(2+|t|),&d=4,\\
|t|^{-(2d+1)/6},&d\ge5.
\end{cases}
\]
%The same conclusion holds for a fixed scalar cutoff kernel \(G_a\) whenever \(|q_\gamma'|\ge c>0\) on its scalar support.
\end{proposition}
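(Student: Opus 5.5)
We start from the factorized representation
\[
G_{d,\gamma}(x,t)=\frac1{(2\pi)^{d+1}}\int_\R\widehat F_t(\eta)\prod_{j=1}^dB_{x_j}(\eta)\dd\eta .
\]
Since $\gamma\in\mathcal E_d$, by \eqref{eq-qder} we have $q_\gamma'(s)=8d+8s+2\gamma$. On $[-d,d]$ this is bounded away from zero: for $\gamma>0$ it is at least $2\gamma>0$, and for $\gamma<-8d$ it is at most $16d+2\gamma<0$. Hence we can pick $a$ equal to $1$ on $[-d,d]$ with support in a slightly larger interval on which $|q_\gamma'|\ge c_0>0$. Both parts of Lemma \ref{lem-Ft} are then available.

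Split the $\eta$-integral into the annulus $A_t=\{c|t|\le|\eta|\le C|t|\}$ and its complement. On the complement we use the trivial bound $|B_m(\eta)|\le 2\pi$ together with \eqref{eq-tail}. This contributes $O(|t|^{-N})$ uniformly in $x$, which is negligible against every rate in the statement.

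On $A_t$ we bound $|\widehat F_t|$ by its supremum $C|t|^{-1/2}$ from \eqref{eq-Ftbound}. We then cover $A_t$ by $O(1)$ dyadic shells $R\le|\eta|\le 2R$ with $R\asymp|t|$; for $|t|$ large we may take $R\ge1$. Proposition \ref{prop-annulus} then gives
\[
\sup_x|G_{d,\gamma}(x,t)|\lesssim |t|^{-1/2}H_d(|t|)+|t|^{-N}.
\]
Evaluating $H_d$ yields the three cases:
\begin{itemize}
\item for $d=2,3$, $|t|^{-1/2}\cdot|t|^{1-d/2}=|t|^{-(d-1)/2}$;
\item for $d=4$, $|t|^{-1/2}\cdot|t|^{-1}\log(2+|t|)=|t|^{-3/2}\log(2+|t|)$;
\item for $d\ge5$, $|t|^{-1/2}\cdot|t|^{-(d-1)/3}=|t|^{-(2d+1)/6}$.
\end{itemize}
For bounded $|t|\ge1$ the trivial bound $|G_{d,\gamma}|\le1$ suffices.

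The only point needing care is localizing $\eta$ to the scale $|t|$, i.e. the tail estimate \eqref{eq-tail}, which is where the exterior condition is used. Without this localization, small $|\eta|$ would not benefit from the decay of $H_d$, and $\sup|\widehat F_t|$ alone would give only $|t|^{-1/2}$. The constant-ratio annulus argument in Lemma \ref{lem-Ft} handles this, so the rest is bookkeeping. One should also check that the constants in Proposition \ref{prop-annulus} are uniform in $x$, which they are by Lemmas \ref{lem-B} and \ref{lem-rho}.
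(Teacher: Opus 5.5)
Your proposal is correct and is essentially the paper's proof: you discard $|\eta|\notin[c|t|,C|t|]$ using \eqref{eq-tail} together with the trivial bound $|B_m|\le 2\pi$, then bound $|\widehat F_t|$ by $|t|^{-1/2}$ on the constant-ratio annulus, cover that annulus by finitely many dyadic shells, and apply \eqref{eq-annulus}. Your extra steps only make explicit what the paper leaves implicit: the check that $|q_\gamma'|\ge c_0$ near $[-d,d]$ for $\gamma\in\mathcal E_d$, the use of $|G_{d,\gamma}|\le 1$ when $c|t|<1$, and the evaluation of $|t|^{-1/2}H_d(|t|)$ in each dimension.
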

\begin{proof}
Equations \eqref{eq-Ftbound} and \eqref{eq-tail} give
\[
|G_{d,\gamma}(x,t)|\lesssim |t|^{-1/2}\int_{c|t|\le|\eta|\le C|t|}\prod_j|B_{x_j}(\eta)|\dd\eta
+O_N(|t|^{-N}).
\]
Cover the constant-ratio annulus by finitely many dyadic annuli and apply \eqref{eq-annulus}.
\end{proof}

%\begin{remark}
%    This proposition gives a nonsharp upper bound in low dimensions. We will show the needed improvements in Section \ref{sec-low}.
%\end{remark}

For \(\gamma\in\mathcal I_d\), the derivative \(q_\gamma'\) vanishes inside \((-d,d)\), see \eqref{eq-qder}. The contribution near \(\eta=0\) cannot be discarded.

\begin{proposition}\label{prop-interior}
For \(d\ge3\) and every fixed \(\gamma\in\R\),
\[
\sup_x|G_{d,\gamma}(x,t)|\lesssim_{d,\gamma}(1+|t|)^{-1/2}.
\]
For \(\gamma\in\mathcal I_d\), this is the upper bound required in Theorem \ref{thm-main}.
\end{proposition}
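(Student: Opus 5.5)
The plan is to combine the uniform scalar bound \eqref{eq-Ftbound} of Lemma \ref{lem-Ft} with the global product integrability \eqref{eq-globalproduct} of Proposition \ref{prop-annulus}. The argument only needs $d\ge3$ and makes no use of the location of the critical point of $q_\gamma$, so it holds for every fixed $\gamma$.

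For $|t|\le 1$ we use the trivial bound $|G_{d,\gamma}(x,t)|\le (2\pi)^{-d}\int_{\T^d}1\dd\xi=1$, since the integrand has modulus one. It then suffices to treat $|t|\ge1$.

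For $|t|\ge1$ we start from the factorized representation of Section \ref{sec-factor},
\[
G_{d,\gamma}(x,t)=\frac1{(2\pi)^{d+1}}\int_\R\widehat F_t(\eta)\prod_{j=1}^dB_{x_j}(\eta)\dd\eta .
\]
Taking absolute values and pulling out $\sup_\eta|\widehat F_t(\eta)|\le C_a|t|^{-1/2}$ from \eqref{eq-Ftbound} gives
\[
|G_{d,\gamma}(x,t)|\le C_a|t|^{-1/2}\sup_{x\in\Z^d}\int_\R\prod_{j=1}^d|B_{x_j}(\eta)|\dd\eta .
\]
The right-hand side is finite and independent of $x$ by \eqref{eq-globalproduct}, which requires $d\ge3$. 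This yields $|t|^{-1/2}\lesssim(1+|t|)^{-1/2}$. The cutoff $a$ is fixed with $a=1$ on $[-d,d]$, so $C_a$ depends only on $d$ and $\gamma$. Note that the second hypothesis of Lemma \ref{lem-Ft} is not needed here; only the van der Corput bound with $q_\gamma''=8$ is used.

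The one delicate point is justifying the Fubini interchange, since $\widehat F_t$ is only bounded and not obviously integrable. Because $F_t=a\,e^{itq_\gamma}$ is smooth and compactly supported, $\widehat F_t$ is Schwartz for each fixed $t$. The $\xi$-integrand is bounded on the compact torus, so Fubini applies directly. The decay bound itself, however, uses only the uniform $L^\infty$ bound on $\widehat F_t$ together with the $t$-independent $L^1$ bound on the product, so no quantitative integrability of $\widehat F_t$ in $t$ is required. Finally, for $\gamma\in\mathcal I_d$ and $d\ge3$ the table in Theorem \ref{thm-main} prescribes $(\beta,p)=(1/2,0)$, which is exactly this bound.
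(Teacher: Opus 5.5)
Your proposal is correct and is the paper's proof: the paper also substitutes the uniform bound \eqref{eq-Ftbound} and the $x$-uniform product bound \eqref{eq-globalproduct} into the factorized representation of $G_{d,\gamma}$. Your additional steps — the trivial bound for $|t|\le1$ and the Fubini justification via the Schwartz decay of $\widehat F_t$ — are valid and only make explicit what the paper leaves implicit.
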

\begin{proof}
This follows by substituting \eqref{eq-Ftbound} and \eqref{eq-globalproduct} into the representation of $G_{d,\gamma}$.
\end{proof}

\subsection{Endpoint parameters}\label{sec-endpoint}
We first take \(\gamma=0\). Near the origin,
\[
\omega(\xi)^2=|\xi|^2+O(|\xi|^4),\qquad
\phi_0(\xi)=|\xi|^4+O(|\xi|^6),
\]
and hence
\begin{equation*}
\Hess\phi_0(\xi)=8\xi\xi^{\mathsf T}+4|\xi|^2I_d+O(|\xi|^4).
\end{equation*}
On a sufficiently small fixed neighborhood, \(\Hess\phi_0(\xi)\ge c|\xi|^2I_d\). We then give the uniform estimate under linear perturbations at a quartic corner directly.

\begin{lemma}\label{lem-quartic}
Let \(f(x)=|x|^4+O(|x|^6)\) be real analytic. For amplitudes supported sufficiently close to the origin,
\[
\sup_{v\in\R^d}\left|\int e^{it(f(x)+v\cdot x)}\psi(x)\dd x\right|
\lesssim(1+|t|)^{-d/4}\norm\psi_{C^N}.
\]
\end{lemma}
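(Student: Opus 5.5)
The plan is to prove the bound by a dyadic decomposition in $|x|$ around the quartic corner. On each dyadic shell the Hessian of the phase is uniformly nondegenerate, so we can combine a rescaled stationary-phase bound on each shell with the trivial bound on a small ball.

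\emph{Small ball.} Since $\Hess f(x)\ge c|x|^2 I_d$ on a small neighborhood (as shown above for $\phi_0$; for a general $f=|x|^4+O(|x|^6)$ the same computation gives $\Hess f=8xx^{\mathsf T}+4|x|^2I_d+O(|x|^4)$), fix $\chi\in C_c^\infty$ with $\chi=1$ near $0$. Split
\[
\psi=\psi\,\chi(|t|^{1/4}x)+\sum_{k}\psi\,\bigl(\chi(2^{-k}|t|^{1/4}x)-\chi(2^{-k+1}|t|^{1/4}x)\bigr),
\]
where $k$ ranges over $1\le k\lesssim \log|t|$ such that $2^k|t|^{-1/4}$ is at most the support radius. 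The first piece is supported on a ball of radius $|t|^{-1/4}$. It contributes at most $\|\psi\|_\infty |t|^{-d/4}$ trivially, uniformly in $v$.

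\emph{Dyadic shells.} For the $k$-th piece, with $r=2^k|t|^{-1/4}$, substitute $x=ry$ with $|y|\sim1$. The integral becomes
\[
r^d\int e^{i\lambda(g_r(y)+w\cdot y)}\tilde\psi_k(y)\dd y,\qquad \lambda=|t|r^4=2^{4k},\quad g_r(y)=r^{-4}f(ry)=|y|^4+O(r^2|y|^6),
\]
with $w=v r^{-3}$ arbitrary. On the annulus $|y|\sim1$ the Hessian of $g_r$ is uniformly positive definite ($\ge c I_d$), with $C^N$ norms bounded uniformly in $r\le r_0$. The amplitudes $\tilde\psi_k$ have $C^N$ norms bounded by $\|\psi\|_{C^N}$ uniformly, since the rescaled cutoffs are fixed functions and $r\le1$.

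\emph{Uniform estimate on a shell.} The key step is a bound, uniform in the linear term $w$, for a phase with a uniformly nondegenerate Hessian on a compact set:
\[
\Bigl|\int e^{i\lambda(g(y)+w\cdot y)}\tilde\psi(y)\dd y\Bigr|\lesssim \lambda^{-d/2}\|\tilde\psi\|_{C^N}.
\]
The map $y\mapsto\nabla g(y)$ is injective on convex pieces of the annulus, because $\Hess g\ge c$ there. Hence, after a finite partition of unity into small convex patches, each patch contains at most one critical point of $g+w\cdot y$. If $-w$ lies at distance $\ge\delta$ from $\nabla g(\text{patch})$, integration by parts gives $O(\lambda^{-N})$. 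Otherwise we use stationary phase with uniform constants, which follows from the Morse lemma with parameters or from the standard $\lambda^{-d/2}$ bound for nondegenerate phases depending smoothly on the parameter $w$ in a compact set. This uniformity in $w$ and $r$ is the main technical point, but it is standard; e.g.\ Stein's Proposition VIII.2.6-type argument (or the appendix's uniform estimates) applies.

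\emph{Summation.} Each shell then contributes
\[
r^d\lambda^{-d/2}=2^{kd}|t|^{-d/4}\,2^{-2kd}=|t|^{-d/4}2^{-kd}.
\]
This sums geometrically over $k$ to $O(|t|^{-d/4})$. Combined with the small-ball term, this gives the claim for $|t|\ge1$. The case $|t|\le1$ is the trivial bound $\|\psi\|_{L^1}$.
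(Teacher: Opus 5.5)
Your proposal is correct and follows essentially the same route as the paper: a dyadic decomposition around the origin, then the rescaling $x=ry$ that turns the phase into $|y|^4+O_{C^m}(r^2)$ with Hessian $\ge cI_d$ on the annulus, then a shell bound $r^d(|t|r^4)^{-d/2}$ uniform in the linear term (stationary phase with parameters for bounded rescaled velocity, integration by parts for large velocity), and finally geometric summation. The only cosmetic difference is that you anchor the shells at the critical scale $|t|^{-1/4}$ and bound the whole inner ball trivially in one piece, whereas the paper uses the fixed partition $r_j=\delta 2^{-j}$ with the bound $r_j^d\min\{1,(Tr_j^4)^{-d/2}\}$ and splits the sum at $r_*=T^{-1/4}$.
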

\begin{proof}
Write \(J(t,v)\) for the integral. The case \(|t|\le1\)
is immediate, so let \(T=|t|\ge1\).
Choose \(0<\delta\le1\) sufficiently small and assume
\(\operatorname{supp}\psi\subset B(0,\delta)\).
A smooth dyadic partition gives
\[
J(t,v)=\sum_{j\ge0}J_j(t,v),\qquad
J_j(t,v)=\int e^{it(f(x)+v\cdot x)}
\psi(x)\theta(x/r_j)\,dx,
\]
where \(r_j=\delta2^{-j}\),
\(\operatorname{supp}\theta\subset\{1/2\le|x|\le2\}\),
and \(\sum_{j\ge0}\theta(x/r_j)=1\) on
\(0<|x|<\delta\).

Set \(r=r_j\), \(f_r(y)=r^{-4}f(ry)\), and \(b=r^{-3}v\).
After \(x=ry\), the oscillation parameter is \(Tr^4\).
Analyticity gives
\(f_r(y)=|y|^4+O_{C^m}(r^2)\) for each fixed \(m\)
on the fixed annulus. In particular,
\[
D^2f_r(y)=8yy^{\mathsf T}+4|y|^2I_d+O(r^2)
\ge cI_d,
\qquad
\|\psi(r\cdot)\theta\|_{C^N}\lesssim\|\psi\|_{C^N},
\]
uniformly in \(r\).
Since the linear perturbation does not change the Hessian,
uniform nondegenerate stationary phase, combined with
the trivial bound, yields
\[
|J_j(t,v)|
\lesssim \|\psi\|_{C^N}r_j^d
\min\{1,(Tr_j^4)^{-d/2}\}.
\]
The constant is independent of \(v\): bounded \(b\)
is covered by stationary phase with parameters,
while sufficiently large \(|b|\) is treated by
nonstationary integration by parts.

Finally, splitting at \(r_*=T^{-1/4}\) and summing
the two geometric series gives
\[
\begin{aligned}
|J(t,v)|
&\lesssim \|\psi\|_{C^N}
\left(\sum_{r_j\le r_*}r_j^d
+T^{-d/2}\sum_{r_j>r_*}r_j^{-d}\right)\\
&\lesssim \|\psi\|_{C^N}
\left(r_*^d+T^{-d/2}r_*^{-d}\right)
\lesssim T^{-d/4}\|\psi\|_{C^N},
\end{aligned}
\]
uniformly in \(v\), as required.

\end{proof}

\begin{proof}[Proof of Theorem \ref{thm-main} when $\gamma=0$]
    
To specify the complementary scalar cutoff, choose \(a\in C_c^\infty(\R)\) equal to one near \([-d,d]\), and \(\chi_0\in C_c^\infty(\R)\) equal to one for \(|u|\le\eps\) and zero for \(|u|\ge2\eps\). Define
\[
a_{\rm low}(s)=a(s)\chi_0(s+d),\qquad
a_{\rm high}(s)=a(s)(1-\chi_0(s+d)).
\]
Their compositions with \(S\) add to one on the torus, so \(G=G_{a_{\rm low}}+G_{a_{\rm high}}\).
Because \(S(\xi)+d=\omega(\xi)^2/2\), the low cutoff is supported only near the origin. For sufficiently small \(\eps\), Lemma \ref{lem-quartic} gives \(t^{-d/4}\) for that part.
The remaining scalar cutoff is exactly \(a_{\rm high}\). On its support,
\(|q_0'(s)|=8|s+d|\ge8\eps\), so Proposition \ref{prop-external} applies. 
Therefore, both contributions are $\mathcal{O}(t^{-1/2})$ when d = 2. For $d \ge 3$, the high-frequency contribution decays faster, so the low-frequency contribution determines the resulting bound.
Dimension one is handled directly in the next section.
\end{proof}

\begin{remark}
    The other endpoint follows from the exact reflection identity
    \begin{equation}\label{eq-reflection}
    \phi_\gamma(\pi\mathbf1-\xi)=16d^2+4d\gamma+\phi_{-8d-\gamma}(\xi).
    \end{equation}
    Thus the endpoints \(\gamma=-8d\) and \(\gamma=0\) have identical bounds for the absolute values of their kernels, up to the corresponding reflection of the lattice variable.
\end{remark}

\begin{remark}
Lemma \ref{lem-quartic} concerns only linear perturbations. For \(d>2\), the best exponent for \(|x|^4\) under arbitrarily small analytic perturbations is only \(1/2\): the perturbation \(-2\eps|x|^2\) creates a critical sphere.
We do not use the false assertion \(M(|x|^4)\curlyeqprec(-d/4,0)\) for that larger perturbation class.
\end{remark}

\section{Low-dimensional supplements}\label{sec-low}
When $d \ge 4$, we will see in the next section that the decay rates given by Proposition \ref{prop-external} and \ref{prop-interior} are indeed sharp. Nevertheless, we still need to treat the remaining low-dimensional cases, some of which were addressed in \cite{C24}. For completeness, we treat such cases in a unified framework.
  
In this section, we will apply the oscillatory integral theory, mainly from Varchenko \cite{V76} and Karpushkin \cite{K83,K84}. In particular, we focus on the uniform estimates in dimensions one and two, which have been well established. 
Here we only summarize the main definitions and results, see \hyperref[sec-app]{Appendix} for more details.

We write \(M(h)\curlyeqprec(\beta,p)\) for a uniform estimate with exponent pair ($\beta$,$p$), for the integral with phase function $h$ under sufficiently small analytic perturbations. The oscillatory index is always determined by the Newton polyhedron associated with the fixed phase. 
In the two-dimensional setting, adapted
coordinates allow us to update the Newton estimate to the uniform estimate (Theorem \ref{theorem-2dim NP uniform}).

We establish analytic stability for each possible degenerate critical point within the torus $\T^d$. Together with Lemma \ref{lemma-regular and nondegenerate estimate}, these stability results enable us to cover some neighborhood of any velocity. Then compactness gives a finite covering since for \(|v|>2\sup|\nabla\phi_\gamma|+1\), a phase derivative is bounded away from zero, and repeated integration by parts gives arbitrarily fast decay. The full torus integral is patched only when \(tv=x\in\Z^d\); local integrals may use arbitrary real velocities.
The low-dimensional estimates obtained below, together with the preceding section, yield the required upper bound.

In this section we write
\begin{equation*}
    \phi_\gamma(\xi)=\omega(\xi)^4+\gamma\omega(\xi)^2,\qquad
    \varphi(v,\xi)=v\cdot\xi+\phi_\gamma(\xi),
\end{equation*}
and 
\begin{equation*}
A=2\omega^2+\gamma,\quad s_j=\sin\xi_j,\quad c_j=\cos\xi_j,
\quad\Hess\phi_\gamma=2A\diag(c_j)+8ss^{\mathsf T}.
\end{equation*}

\subsection{Dimension one}
Put \(c=\cos\xi\), \(s=\sin\xi\), and \(B=4+\gamma\). Then
\[
\phi_\gamma''=2Bc+8-16c^2,\qquad
\phi_\gamma'''=2s(16c-B).
\]
If \(s\ne0\) and the third derivative vanishes, then \(c=B/16\), and
\(\phi_\gamma''=8+B^2/16>0\).
If \(s=0\), the second derivative can vanish only at
\((\gamma,\xi)=(0,0),(-8,\pi)\), where the first nonzero derivative has order four.

Thus for $\gamma \in \R \backslash \{-8,0\}$, we see from \(\phi''(0)=2\gamma\), \(\phi''(\pi)=-2(\gamma+8)\), and \(\phi''(\pi/2)=8\) that the second derivative changes sign somewhere. At such a zero the third derivative is nonzero, giving a cubic power by Theorem \ref{lemma-van der corput}. 
For $\gamma \in \{-8,0\}$, we use a fourth-derivative estimate. It is straightforward to verify that the two quadratic equations
\begin{equation*}
    -16c^2+2Bc+8=0 \,\,\mbox{and}\,\, 64c^2-2Bc-32=0
\end{equation*}
have no common root in $[-1,1]$. 

\subsection{Dimension two}
In dimension two, rank zero of Hess$\phi_{\gamma}$ forces \(s_1s_2=0\) from the off-diagonal entry. The diagonal entries then force \(A=0\) and \(s_1=s_2=0\).

When $\gamma \in [-16,0]$, the Hessian can have rank zero only when $\xi_j(j=1,2) \in \{-\pi,0,\pi\}$ for \(\gamma=0,-8,-16\). Recall that \(\gamma=0,-16\) have already been treated in Section \ref{sec-endpoint}. For \(\gamma=-8\), 
we assume $\xi_0 = (0,\pi)$ without loss of generality and expand the phase at this point to get
\begin{equation*}
    \begin{aligned}
        \phi_{-8}(\xi+\xi_0) 
        %& = (4-2\cos \xi_1 -2\cos(\xi_2+\pi))^2 -8(4-2\cos \xi_1- 2\cos(\xi_2+\pi)) \\
        & = (-4-2\cos \xi_1 +2\cos \xi_2)(4-2\cos\xi_1 +2\cos \xi_2) \\
        & = \tilde{c} + (\xi_1^2-\xi_2^2)^2 + \text{higher-order terms}.
    \end{aligned}
\end{equation*}
Thus, up to an additive constant and higher-order terms, \(\phi_{-8}\) is equivalent to \((\xi_1 ^2-\xi_2^2)^2\), and then by a linear change of variables to \(u^2v^2\). Then Theorem \ref{theorem-2dim NP uniform} gives \(t^{-1/2}\log t\).
Furthermore, every other point has Hessian rank at least one and admits the local \(t^{-1/2}\) estimate.

When $\gamma \in \mathcal E_2$, we have \(A\ne0\). If exactly one cosine vanishes, the Hessian is invertible. If both vanish, we expand the phase near \((\pi/2,\pi/2)\). In coordinates \((\xi_1,\xi_2)=(u+z,u-z)\),
\[
\varphi-\varphi_*=16u^2-2Au z^2-\frac{2A}{3}u^3+\text{higher-order terms}.
\]
Therefore, the Newton polyhedron is
\begin{equation*}
    \{(\lambda+1,2-2\lambda):\lambda \in [0,1]\}+\R^2_+.
\end{equation*}
Theorem \ref{theorem-2dim NP uniform} implies the local exponent \(3/4\).

If \(c_1c_2\ne0\) and the Hessian has rank one, set \(K=\sum s_j^2/c_j=-A/4\ne0\). Inspired by the null direction \(\tau_j=s_j/c_j\), we diagonalize the Hessian using 
\[
(\xi_1,\xi_2) = (\tau_1 y_1-\tau_2y_2,\tau_2y_1+\tau_1y_2),
\]
obtaining
\begin{equation*}
    \phi_{\gamma}(y) = \tilde{c} + a_1y_1^3 + a_2y_2^2 + \text{higher-order terms}.
\end{equation*}
Its cubic coefficient is
\[
a_1=\frac{4K}{3}\sum_{j=1}^2(c_j^{-3}+c_j^{-1}-2c_j).
\]
The sum on the right cannot vanish. Indeed, \(h(c)=c^{-3}+c^{-1}-2c\) is odd, strictly decreasing and nonnegative on \((0,1]\).
Cosines of the same sign cannot give zero, except at endpoints that force \(K=0\). For opposite signs, zero forces \(c_2=-c_1\), again implying \(K=0\). Note also that $a_2 \neq 0$ holds a priori since the Hessian has rank one. Again Theorem \ref{theorem-2dim NP uniform} gives exponent \(5/6\), stronger than the required \(3/4\).

\subsection{Dimension three}
It is sufficient to assume \(\gamma\in\mathcal E_3\) since the other cases have been handled by Proposition \ref{prop-interior} and Lemma \ref{lem-quartic}. By \eqref{eq-reflection}, we may take \(A=2\omega^2+\gamma>0\).

Although the integrals involve three variables, the explicit Hessian formula shows that exactly one, two, or three zero cosines give ranks three, two, or one, respectively. If all cosines are nonzero, the rank is at least two. Therefore, after splitting off the nondegenerate quadratic directions, at most two degenerate variables remain, so Theorem \ref{theorem-2dim NP uniform} is applicable. We consider the cases separately.

\underline{$\bullet$ All three vanishing cosines.} 
This case reduces to \((\pi/2,\pi/2,\pi/2)\). Set \(L=y_1+y_2+y_3\). Then
\[
\varphi-\varphi_*=4L^2-\frac A3\sum_jy_j^3+O(|y|^4).
\]
Taking \(y_1=z_1,y_2=z_2,y_3=z_3-z_1-z_2\), the weighted principal part is
\[
4z_3^2+A z_1z_2(z_1+z_2),\qquad\text{weights }(1/3,1/3,1/2).
\]
It is straightforward to verify that $\{z_1,z_2\}$ is an adapted coordinate system by Theorem \ref{theorem-jugde adapted coordinate} and hence
\[
M(\varphi-\varphi_*) \curlyeqprec M(4z_3^2+A z_1z_2(z_1+z_2))\curlyeqprec (-1/2,0) + M(z_1^2z_2+z_1z_2^2) \curlyeqprec (-7/6,0).
\]
%The binary cubic has three distinct real linear factors and uniform exponent \(-2/3\). Adding the square gives \(-7/6\).

\underline{$\bullet$ Exactly two zero cosines.}
After coordinate reflections and a permutation, consider
\[
\xi_*=\left(\frac{\pi}{2},\frac{\pi}{2},b\right),
\quad
c=\cos b\ne0,\quad s=\sin b,\quad a_*=\omega(\xi_*)^2=6-2c.
\]
We then introduce
\[
\xi_1=\frac{\pi}{2}+y_1+y_2,\qquad
\xi_2=\frac{\pi}{2}+y_1-y_2,\qquad
\xi_3=b+y_3,
\]
and define
\[
D(y)=\omega(\xi(y))^2-a_*
=4\sin y_1\cos y_2
 +2c(1-\cos y_3)+2s\sin y_3.
\]
The exact phase increment is
\begin{equation}
\label{eq-two-zero-exact}
F(y)=AD(y)+D(y)^2-4Ay_1-2Asy_3= F_2(y)+F_3(y)+O(|y|^4),
\end{equation}
where
\begin{align*}
F_2(y)
&=16y_1^2+16sy_1y_3+(4s^2+Ac)y_3^2,\\
F_3(y)
&=-\frac{2A}{3}y_1^3-2Ay_1y_2^2
  +8cy_1y_3^2
  +\left(4cs-\frac{As}{3}\right)y_3^3.
\end{align*}
In particular, \(y_2\) is the null direction of the Hessian.
The cubic term \(-2Ay_1y_2^2\) couples this direction to
the nondegenerate variables. Put
\[
K=4s^2+Ac,\qquad
B=\begin{pmatrix}
16&8s\\
8s&K
\end{pmatrix}.
\]
Since $B$ has full rank, we choose a rotation \(R\in SO(2)\) such that $R^{\mathsf T}BR
=\operatorname{diag}(\lambda_+,\lambda_-)$.

Make the change of variables
\[
\begin{pmatrix}y_1\\y_3\end{pmatrix}
=R\begin{pmatrix}z_1\\z_3\end{pmatrix},
\qquad z_2=y_2.
\]
The phase becomes
\[
\widetilde F(z)
=\lambda_+z_1^2+\lambda_-z_3^2
 +\widetilde F_3(z)+O(|z|^4).
\]

This rotation diagonalizes only the quadratic part.
To remove the higher-order couplings, apply the analytic
splitting lemma with \(z_2\) as a parameter. A further local
analytic change of the two nondegenerate variables, leaving
\(z_2\) unchanged, gives
\begin{equation*}
\widetilde F
=\varepsilon_1\eta_1^2+\varepsilon_3\eta_3^2+g(z_2),
\qquad \varepsilon_1,\varepsilon_3\in\{-1,1\}.
\end{equation*}
The Jacobian is nonzero and is absorbed into the amplitude.

Each nondegenerate quadratic variable contributes \(-1/2\)
to the uniform estimate. Hence the two quadratic
variables together contribute \(-1\). It remains to determine the first nonzero derivative
of the one-variable phase \(g\).

Since the Hessian block in \((y_1,y_3)\) is invertible,
the implicit function theorem gives a unique analytic branch
\[
y_1=h_1(\tau),\qquad y_3=h_3(\tau),\qquad \tau=y_2,
\]
satisfying
\[
\partial_{y_1}F(h_1(\tau),\tau,h_3(\tau))=0,\qquad
\partial_{y_3}F(h_1(\tau),\tau,h_3(\tau))=0,
\]
with \(h_1(0)=h_3(0)=0\).
Then
\[
g(\tau)=F(h_1(\tau),\tau,h_3(\tau)).
\]
The exact phase is even in \(y_2\). By uniqueness,
\(h_1,h_3\), and \(g\) are even functions.

Comparing the terms of order \(\tau^2\) in the stationary
equations gives
\[
B
\begin{pmatrix}h_1(\tau)\\h_3(\tau)\end{pmatrix}
=
\begin{pmatrix}A\\0\end{pmatrix}\tau^2+O(\tau^4).
\]
Therefore,
\[
h_1(\tau)=\frac{K}{16c}\tau^2+O(\tau^4),
\qquad
h_3(\tau)=-\frac{s}{2c}\tau^2+O(\tau^4).
\]
Substitution into the phase yields
\begin{equation}
\label{eq-two-zero-quartic}
g(\tau)=-\frac{AK}{16c}\tau^4+O(\tau^6).
\end{equation}
%For this calculation, note from
%\eqref{eq-two-zero-exact} that \(F(0,\tau,0)\equiv0\);
%there is no independent pure \(\tau^4\) term.
%The quartic coefficient arises from the quadratic form
%and the coupling \(-2Ay_1\tau^2\).

If \(K\ne0\), the coefficient in
\eqref{eq-two-zero-quartic} is nonzero. The one-dimensional
uniform estimate contributes \(-1/4\), and hence gives the local uniform exponent $-5/4$.

Suppose now that
\[
K=4s^2+Ac=0.
\]
Then \(s\ne0\), since otherwise \(K=Ac\ne0\).
To determine the next term, we use the exact phase
\eqref{eq-two-zero-exact}, rather than an unspecified
Taylor remainder.

Along the stationary branch, put
\[
\alpha(\tau)=A+2D(h_1(\tau),\tau,h_3(\tau)).
\]
The two stationary equations become
\[
\alpha(\tau)\cos h_1(\tau)\cos\tau=A,
\qquad
\alpha(\tau)\sin(b+h_3(\tau))=As.
\]
Since the quadratic coefficient of \(h_1\) vanishes,
write
\[
h_1(\tau)=u_4\tau^4+O(\tau^6).
\]
The first stationary equation gives
\[
\alpha(\tau)=A\sec\tau+O(\tau^8).
\]
Using the second equation and comparing fourth-order
coefficients in the identity \(\alpha=A+2D\), we obtain
\[
8u_4
=\frac{5A}{24}-\frac{2s^2}{3c}-\frac{s^4}{2c^3}
=-\frac{s^2(1+2c^2)}{2c^3},
\]
where the last equality uses \(Ac=-4s^2\).

Differentiation along the stationary branch eliminates
the derivatives of \(h_1\) and \(h_3\), so
\[
g'(\tau)
=\partial_{y_2}F(h_1(\tau),\tau,h_3(\tau))
=-4\alpha(\tau)\sin h_1(\tau)\sin\tau.
\]
Hence
\[
g'(\tau)=-4Au_4\tau^5+O(\tau^7),
\]
and integration gives
\begin{equation*}
g(\tau)
=-\frac{s^4(1+2c^2)}{6c^4}\tau^6+O(\tau^8).
\end{equation*}
The sextic coefficient is strictly negative.
Thus the remaining variable contributes \(-1/6\),
and the total local uniform exponent is $-7/6$.

%The uniformity in these two cases follows from the parameter-dependent splitting lemma and the one-dimensional van der Corput estimate: the relevant fourth or sixth derivative of the reduced phase remains bounded away from zero on a sufficiently small neighborhood under sufficiently small smooth perturbations.

%Consequently, points with exactly two zero cosines have
%local uniform exponent \(-5/4\) when \(K\ne0\), and
%\(-7/6\) when \(K=0\). Both cases satisfy the
%\(t^{-7/6}\) upper bound required for the global
%three-dimensional exterior-parameter estimate.

\underline{$\bullet$All three cosines nonzero.}
Let $\xi_*$ be a critical point of $\varphi(v_*,\xi)$, and set
\[
c_j=\cos\xi_{*,j}\ne0,\quad s_j=\sin\xi_{*,j},\quad s=(s_1,s_2,s_3)^{\mathsf T}\quad
q(\xi)=\omega(\xi)^2,\quad A_*=A(\xi_*)>0.
\]
Since
\[
D_\xi^2\varphi(v_*,\xi_*)
=2A_*\diag(c_1,c_2,c_3)+8ss^{\mathsf T},
\]
the Hessian has rank at least two. Rank three gives the
uniform local bound $O(|t|^{-3/2})$, so assume its rank is two.
The analytic splitting lemma gives
\[
\varphi(v_*,\xi)-\varphi(v_*,\xi_*)
=Q_2(u)+g(z),\qquad g(0)=g'(0)=g''(0)=0,
\]
where $Q_2$ is nondegenerate. We show that
$\operatorname{ord}_0 g\le6$.

Introduce the auxiliary phase
\begin{equation}\label{eq-auxiliary-phase}
\Phi(\xi,\alpha)
=v_*\cdot\xi+\alpha q(\xi)-\frac{(\alpha-\gamma)^2}{4}.
\end{equation}
Completing the square yields
\begin{equation}\label{eq-auxiliary-square}
\Phi(\xi,\alpha)
=\varphi(v_*,\xi)-\frac{(\alpha-A(\xi))^2}{4}.
\end{equation}
Thus the analytic coordinate $\rho=(\alpha-A(\xi))/2$
adds only a nondegenerate square and preserves the
residual degenerate germ. On the other hand,
$D_\xi^2\Phi=2\alpha\diag(\cos\xi_j)$ is invertible near
$(\xi_*,A_*)$, allowing all three $\xi$-variables to be
eliminated with $\alpha$ as a parameter.

Put $b_j=-v_{*,j}/2=A_*s_j$ and
$\varepsilon_j=\sgn(c_j)$. The stationary branch
$\xi=\Xi(\alpha)$ satisfies
\[
\sin\Xi_j(\alpha)=\frac{b_j}{\alpha},\qquad
\cos\Xi_j(\alpha)
=\varepsilon_j\frac{\sqrt{\alpha^2-b_j^2}}{\alpha},
\qquad \Xi(A_*)=\xi_*.
\]
Here $A_*>|b_j|$, so these expressions are analytic.
For $h(\alpha)=\Phi(\Xi(\alpha),\alpha)$, differentiation
along the stationary branch gives
\begin{equation}\label{eq-auxiliary-reduced-derivative}
h'(\alpha)
=q(\Xi(\alpha))-\frac{\alpha-\gamma}{2}
=-\frac{\mathcal H(\alpha)}{2\alpha},
\end{equation}
where
\begin{equation}\label{eq-auxiliary-H}
\mathcal H(\alpha)
=\alpha^2-(12+\gamma)\alpha
 +4\sum_{j=1}^3\varepsilon_j\sqrt{\alpha^2-b_j^2}.
\end{equation}
The Hessian of $\Phi$ has rank three by
\eqref{eq-auxiliary-square}. Splitting off its invertible
$\xi$-block therefore gives $h'(A_*)=h''(A_*)=0$, and hence
$\mathcal H(A_*)=\mathcal H'(A_*)=0$.

We claim that the multiplicity of this zero is at most five.
Write $B_j=b_j^2$ and
\[
T_\ell(\alpha)
=\sum_j\varepsilon_jB_j
       (\alpha^2-B_j)^{-5/2-\ell}.
\]
Direct differentiation gives
\[
\begin{aligned}
\mathcal H''(\alpha)
 &=2-4\sum_j\varepsilon_jB_j(\alpha^2-B_j)^{-3/2},\\
\mathcal H'''(\alpha)&=12\alpha T_0(\alpha),\\
\mathcal H^{(4)}(\alpha)&=12T_0(\alpha)-60\alpha^2T_1(\alpha),\\
\mathcal H^{(5)}(\alpha)&=-180\alpha T_1(\alpha)
                         +420\alpha^3T_2(\alpha).
\end{aligned}
\]
If the multiplicity were at least six, these identities
would imply $T_0(A_*)=T_1(A_*)=T_2(A_*)=0$.
Group equal positive values of $B_j$, and define
\[
x_B=(A_*^2-B)^{-1},\qquad
W_B=\Bigl(\sum_{j:B_j=B}\varepsilon_j\Bigr)
       B(A_*^2-B)^{-5/2}.
\]
Then $\sum_B W_Bx_B^\ell=0$ for $\ell=0,1,2$.
There are at most three distinct nodes $x_B$, so the
Vandermonde matrix has full column rank and all $W_B$ vanish.
Consequently,
\[
\mathcal H''(A_*)=2-4\sum_B W_B/x_B=2,
\]
a contradiction. By \eqref{eq-auxiliary-reduced-derivative},
\[
h(\alpha)-h(A_*)
=c_k(\alpha-A_*)^k+O((\alpha-A_*)^{k+1}),
\qquad c_k\ne0,\quad 3\le k\le6.
\]

The same order occurs in $g$. Indeed, the two splittings
of $\Phi$ identify its gradient local algebra with both
$\R\{z\}/(g'(z))$ and $\R\{\alpha-A_*\}/(h'(\alpha))$:
analytic coordinate changes preserve this algebra, and
nondegenerate quadratic variables disappear in the quotient.
The latter algebra has dimension $k-1$, so
$\operatorname{ord}_0g=k$.

Finally, apply the splitting lemma with the perturbation
as a parameter. The two quadratic directions remain
nondegenerate, and the $k$th derivative of the reduced phase remains bounded away from zero for sufficiently
small analytic perturbations. Uniform stationary phase in the quadratic variables, followed by the
van der Corput estimate, gives
\[
M\bigl(\varphi(v_*,\cdot),\xi_*\bigr)
\curlyeqprec(-1-1/k,0)
\curlyeqprec(-7/6,0).
\]

\begin{remark}
The exponent \(-7/6\) used in the preceding local estimates
need not be optimal at every critical point. 
The preceding analysis shows that the reduced phase may have order four, giving \(O(|t|^{-5/4})\) when exactly two cosines vanish. If all three cosines are nonzero and the Hessian has rank two, the reduced phase has order \(3\le k\le6\), giving \(O(|t|^{-1-1/k})\). Nondegenerate critical points give \(O(|t|^{-3/2})\).

Nevertheless, the common exponent is determined by the slowest local contributions. For every fixed \(\gamma\in(-\infty,-24)\cup(0,\infty)\), at \(\xi_*=(\pi/2,\pi/2,\pi/2)\), the sharp local exponent is $-7/6$.
\end{remark}

\section{Sharpness}\label{sec-sharp}

Throughout this section, \(d\ge2\), and we only consider
\(t\to+\infty\).
To prove sharpness of a uniform decay estimate, it often
suffices to obtain a nonzero leading term after a fixed
smooth localization near a critical point.
%Indeed, smooth frequency localization is bounded convolution in space, so a localized lower bound transfers to the spatial supremum of the full kernel. This does not require a pointwise comparison between the full and localized kernels at the same position.
Here we also allow time-dependent multipliers whose
supports shrink towards a fixed critical point.
The following lemma provides the required uniform
localization estimate and a lower-bound criterion.
%Its proof uses the summability and small-phase mechanisms of \cite[Section 5]{YZ}, formulated here for local multipliers.
For a multiplier \(a_t\), write
\[
G_{a_t}(x,t)
=(2\pi)^{-d}\int_{\mathbb T^d}
e^{i[x\cdot\xi+t\phi_\gamma(\xi)]}
a_t(\xi)\,d\xi.
\]

\begin{lemma}
%[Uniform localization and a small-phase lower bound]
\label{lem-multiplier}
\label{lem-box}
%The following statements hold.
Let \(L_t\) be invertible real \(d\times d\) matrices
with \(\|L_t\|\le C\) and $v_*=-\nabla\phi_\gamma(\xi_*)$.
\begin{enumerate}
\renewcommand{\labelenumi}{(\arabic{enumi})}

\item

Let \(b_t\in C_c^\infty(\mathbb R^d)\) have support
in a fixed compact set and uniformly bounded derivatives
of every fixed order.
Suppose that
\[
a_t(\xi_*+y)=b_t(L_t^{-1}y)
\]
is supported inside one torus coordinate chart and is
extended by zero outside that chart.
Then its Fourier coefficients \(k_t\) satisfy
\[
\sup_t\|k_t\|_{\ell^1(\mathbb Z^d)}<\infty.
\]
Consequently,
\begin{equation}
\label{eq-multbound}
\sup_{x\in\mathbb Z^d}|G_{a_t}(x,t)|
\le C\sup_{x\in\mathbb Z^d}|G_{d,\gamma}(x,t)|,
\end{equation}
with \(C\) independent of \(t\).
In particular, this conclusion applies to a fixed
smooth cutoff \(a_t=\psi\): take \(L_t=I_d\) and
\(b_t(y)=\psi(\xi_*+y)\).

\item
Let \(L_t\) satisfy \(L_t\to0\). Suppose that, uniformly on a fixed neighborhood of the origin,
\[
t\left\{
\phi_\gamma(\xi_*+L_tz)-\phi_\gamma(\xi_*)
+v_*\cdot L_tz
\right\}
\longrightarrow P(z),
\qquad P(0)=0,
\]
where \(P\) is continuous. Then, for all sufficiently large \(t\),
\[
\sup_{x\in\mathbb Z^d}|G_{d,\gamma}(x,t)|
\gtrsim |\det L_t|.
\]
\end{enumerate}
\end{lemma}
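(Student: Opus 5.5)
For part (1) I will bound the Fourier coefficients of \(a_t\) directly. The coefficients are
\[
k_t(x)=(2\pi)^{-d}\int e^{-ix\cdot\xi}a_t(\xi)\dd\xi .
\]
Working in the chart and substituting \(y=L_tw\), this becomes
\[
k_t(x)=(2\pi)^{-d}e^{-ix\cdot\xi_*}|\det L_t|\,\widehat{b_t}(L_t^{\mathsf T}x).
\]
Uniform Schwartz bounds on \(b_t\) (fixed support, bounded derivatives) give
\[
|\widehat{b_t}(\zeta)|\lesssim_N(1+|\zeta|)^{-N}.
\]
Summing over \(x\in\Z^d\), I compare the sum with the integral \(\int(1+|L_t^{\mathsf T}x|)^{-N}\dd x=|\det L_t|^{-1}\int(1+|\zeta|)^{-N}\dd\zeta\), which exactly cancels the factor \(|\det L_t|\).

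The comparison between sum and integral is where \(\|L_t\|\le C\) enters. Over a unit cube around \(x\), the quantity \(|L_t^{\mathsf T}x|\) changes by at most \(C\sqrt d\), so \((1+|L_t^{\mathsf T}x|)^{-N}\) is comparable to its cube average up to a factor depending only on \(C,d,N\). This gives \(\sup_t\|k_t\|_{\ell^1}<\infty\).

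Estimate \eqref{eq-multbound} then follows because multiplication on the torus corresponds to convolution on \(\Z^d\): \(G_{a_t}(\cdot,t)=k_t*G_{d,\gamma}(\cdot,t)\) up to the \((2\pi)^{d}\) normalisation, and Young's inequality \(\ell^1*\ell^\infty\to\ell^\infty\) concludes. The special case of a fixed cutoff is immediate.

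For part (2) I plan to argue by contradiction using part (1) with shrinking multipliers. Pick \(b\in C_c^\infty\) supported in a small ball \(B(0,r)\) inside the neighbourhood where the convergence to \(P\) holds, with \(b\ge0\), \(b\not\equiv0\), and \(r\) small enough that \(|P|\le\pi/3\) on \(B(0,r)\) (possible since \(P\) is continuous with \(P(0)=0\)). Set \(a_t(\xi_*+y)=b(L_t^{-1}y)\); for large \(t\) this lies in one chart because \(L_t\to0\).

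The key step is to evaluate \(G_{a_t}\) at a lattice point \(x\) approximating \(tv_*\). Take \(x=x_t\in\Z^d\) with \(|x_t-tv_*|\le\sqrt d\). Substituting \(y=L_tz\) gives
\[
|G_{a_t}(x_t,t)|=(2\pi)^{-d}|\det L_t|\Bigl|\int e^{i\Psi_t(z)}b(z)\dd z\Bigr|,
\]
where
\[
\Psi_t(z)=t\{\phi_\gamma(\xi_*+L_tz)-\phi_\gamma(\xi_*)+v_*\cdot L_tz\}+(x_t-tv_*)\cdot L_tz .
\]
The constant \(t\phi_\gamma(\xi_*)+x_t\cdot\xi_*\) is only a unimodular factor, which is why it disappears from the modulus. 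The last term of \(\Psi_t\) is \(O(\|L_t\|)\to0\) uniformly on \(B(0,r)\). So \(\Psi_t\to P\) uniformly, and for large \(t\) we have \(|\Psi_t|\le\pi/2\) on the support of \(b\).

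Taking real parts, \(\operatorname{Re}\int e^{i\Psi_t}b\ge\cos(\pi/2)\cdot\int b\), which is useless at exactly \(\pi/2\). So I will use a strict bound \(|\Psi_t|\le 2\pi/5\) instead, giving \(|\int e^{i\Psi_t}b|\ge\cos(2\pi/5)\int b>0\). Hence \(|G_{a_t}(x_t,t)|\gtrsim|\det L_t|\), and \eqref{eq-multbound} (with \(\|L_t\|\le C\) and \(b\) fixed, so part (1) applies) transfers this lower bound to \(\sup_x|G_{d,\gamma}(x,t)|\).

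I expect the main delicate point to be the uniformity in part (1), namely that the \(\ell^1\) bound does not deteriorate as \(L_t\) degenerates. This is handled by the Jacobian cancellation described above, together with the sum-to-integral comparison, which needs only the upper bound \(\|L_t\|\le C\) and no lower bound on \(L_t\).
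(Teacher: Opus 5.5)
Your proposal is correct and follows essentially the same route as the paper: the Jacobian-cancelling comparison of $\sum_n(1+|L_t^{\mathsf T}n|)^{-N}$ with $\int(1+|L_t^{\mathsf T}u|)^{-N}\,du$ gives the uniform $\ell^1$ bound, and you then evaluate at a rounded lattice point $x_t\approx tv_*$ with a fixed nonnegative bump on whose support the limiting phase is small. The differences are cosmetic: you bound $\operatorname{Re}\int e^{i\Psi_t}b$ directly for each large $t$ instead of passing to the limit integral $\int e^{iP}b$, and the argument you announce as ``by contradiction'' is actually a direct one.
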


\begin{proof}
(1) Use the Fourier convention
\[
k_t(n)=(2\pi)^{-d}
\int_{\mathbb T^d}e^{in\cdot\xi}a_t(\xi)\,d\xi,
\qquad
a_t(\xi)=\sum_{n\in\mathbb Z^d}k_t(n)e^{-in\cdot\xi}.
\]
Changing variables \(y=L_tz\) and integrating by parts
give, for every fixed \(N\),
\[
|k_t(n)|
\le C_N|\det L_t|
       (1+|L_t^{\mathsf T}n|)^{-N}.
\]
For \(u\in n+[-1/2,1/2]^d\),
\[
|L_t^{\mathsf T}(u-n)|\le C\sqrt d/2.
\]
Thus the weights with arguments \(L_t^{\mathsf T}n\)
and \(L_t^{\mathsf T}u\) are uniformly comparable.
Integrating over each unit cube and summing yield,
for \(N>d\),
\begin{align*}
\sum_{n\in\mathbb Z^d}|k_t(n)|
&\le C_N|\det L_t|
 \int_{\mathbb R^d}(1+|L_t^{\mathsf T}u|)^{-N}\,du\\
&=C_N\int_{\mathbb R^d}(1+|z|)^{-N}\,dz
<\infty.
\end{align*}
The constant is independent of \(t\).
Since
\[
G_{a_t}(x,t)
=\sum_{n\in\mathbb Z^d}k_t(n)G_{d,\gamma}(x-n,t),
\]
the convolution inequality proves
\eqref{eq-multbound}.

\medskip
(2) Choose \(x_t\in\mathbb Z^d\) by taking a nearest integer
to each coordinate of \(tv_*\).
With
\[
\delta_t=x_t-tv_*,
\]
we then have \(|\delta_t|\le\sqrt d/2\).
Choose a fixed function
\[
b\in C_c^\infty(\mathbb R^d),\qquad
b\ge0,\qquad b\not\equiv0,
\]
supported sufficiently close to the origin that
\(|P(z)|<\pi/4\) on its support.
Set
\begin{equation*}
a_t(\xi_*+y)=b(L_t^{-1}y),
\end{equation*}
extended by zero outside the chart.
For all sufficiently large \(t\), this multiplier
satisfies the assumptions of (1).

Put
\[
F(y)=\phi_\gamma(\xi_*+y)-\phi_\gamma(\xi_*)
     +v_*\cdot y,
\qquad
\Theta_t=x_t\cdot\xi_*+t\phi_\gamma(\xi_*).
\]
The phase identity
\[
x_t\cdot(\xi_*+y)+t\phi_\gamma(\xi_*+y)
=\Theta_t+tF(y)+\delta_t\cdot y
\]
and the substitution \(y=L_tz\) give
\begin{align*}
\frac{e^{-i\Theta_t}G_{a_t}(x_t,t)}{|\det L_t|}
&=(2\pi)^{-d}
 \int e^{i[tF(L_tz)+\delta_t\cdot L_tz]}b(z)\,dz\\
&\longrightarrow
 (2\pi)^{-d}\int e^{iP(z)}b(z)\,dz.
\end{align*}
The rounding term tends to zero uniformly on the fixed
support of \(b\), because \(\delta_t\) is bounded and
\(L_t\to0\).
The limiting integral has strictly positive real part:
\[
\operatorname{Re}
\left((2\pi)^{-d}\int e^{iP(z)}b(z)\,dz\right)
\ge (2\pi)^{-d}2^{-1/2}\int b(z)\,dz>0.
\]
Hence
\[
|G_{a_t}(x_t,t)|\gtrsim|\det L_t|.
\]
Finally, (1) implies the full lower bound.
%\[
%|G_{a_t}(x_t,t)|
%\le C\sup_{x\in\mathbb Z^d}|G_{d,\gamma}(x,t)|.
%\]
%There is no restriction that \(x_t\) be independent of time: the localization estimate is uniform over all lattice points for each \(t\).
\end{proof}

\subsection{The interior interval}
\label{sec-shelllower}

For \(\gamma\in\mathcal I_d\), put
\[
\lambda=-\gamma/2\in(0,4d).
\]
Choose \(\xi_*\) with all coordinates equal and in
\((0,\pi)\), such that \(\omega(\xi_*)^2=\lambda\).
Then
\[
\nabla(\omega^2)(\xi_*)\ne0.
\]
Consequently, \(s=\omega^2-\lambda\), together with
\(d-1\) additional coordinates, defines a local
coordinate system. Choose a sufficiently small neighborhood \(U\) of
\(\xi_*\) on which \(\nabla(\omega^2)\ne0\), and a
fixed function
\[
\psi\in C_c^\infty(U),\qquad
\psi\ge0,\qquad \psi(\xi_*)>0.
\]
In Lemma~\ref{lem-multiplier}(1), take
\[
a_t(\xi)=\psi(\xi),\qquad
L_t=I_d,\qquad
b_t(y)=\psi(\xi_*+y).
\]
Only part (1) is needed in this application.

Since
\[
\phi_\gamma(\xi)
=(\omega(\xi)^2-\lambda)^2-\frac{\gamma^2}{4},
\]
the coarea formula at the lattice point \(x=0\) gives
\[
G_\psi(0,t)
=(2\pi)^{-d}e^{-it\gamma^2/4}
\int_{\mathbb R}e^{its^2}A(s)\,ds,
\]
where
\[
A(s)=
\int_{\omega^2=\lambda+s}
\frac{\psi}{|\nabla(\omega^2)|}\,d\sigma.
\]
The function \(A\) is smooth and compactly supported.
Moreover,
\[
A(0)=
\int_{\omega^2=\lambda}
\frac{\psi}{|\nabla(\omega^2)|}\,d\sigma>0:
\]
the level set is regular, and \(\psi\) is positive on
a relatively open neighborhood of \(\xi_*\) in that
level set. One-dimensional stationary phase yields
\[
G_\psi(0,t)
=(2\pi)^{-d}e^{-it\gamma^2/4}
\left(e^{i\pi/4}\sqrt{\pi}\,A(0)t^{-1/2}
+O(t^{-3/2})\right).
\]
Lemma~\ref{lem-multiplier}(1) therefore gives
\[
\sup_{x\in\mathbb Z^d}|G_{d,\gamma}(x,t)|
\gtrsim t^{-1/2}.
\]
This proves sharpness whenever the corresponding upper
bound is \(O(t^{-1/2})\).
The additional logarithm at \(d=2,\gamma=-8\) is
treated below.

\subsection{Power bounds: endpoints and exterior parameters}
\label{sec-powerlower}

At \(\gamma=0\), take
\[
\xi_*=0,\qquad v_*=0,\qquad L_t=t^{-1/4}I_d.
\]
Since $\phi_0(\xi)=|\xi|^4+O(|\xi|^6)$, the limiting phase is
\[
t\phi_0(L_tz)\longrightarrow |z|^4.
\]
Explicitly choose
\[
a_t(\xi)=b(t^{1/4}\xi),
\]
where \(b\) is fixed, smooth, nonnegative, nonzero,
and supported in a sufficiently small box.
Lemma~\ref{lem-multiplier}(2) gives
\[
\sup_{x\in\mathbb Z^d}|G_{d,0}(x,t)|
\gtrsim t^{-d/4}.
\]
At the other endpoint \(\gamma=-8d\), we simply use the reflected cutoff $a_t(\xi)=b(t^{1/4}(\pi\mathbf1-\xi))$.

%in a torus chart about \(\pi\mathbf1\). The reflection identity gives the same lower bound.

For exterior parameters, let
\[
\xi_*=(\pi/2,\ldots,\pi/2),\qquad
A_*=4d+\gamma\ne0,\qquad
v_*=-2A_*\mathbf1.
\]
Write
\[
y=z+re,\qquad
e=d^{-1/2}\mathbf1,\qquad z\perp e.
\]
Taylor expansion gives
\begin{align}
\label{eq-cornerexpand}
F(y)
&:=\phi_\gamma(\xi_*+y)-\phi_\gamma(\xi_*)
   +v_*\cdot y\notag\\
&=4dr^2-\frac{A_*}{3}\sum_{j=1}^dz_j^3
 +\text{cubic terms containing }r+O(|y|^4).
\end{align}

For \(d=3\) or \(d\ge5\), use normal scale \(t^{-1/2}\)
and tangential scale \(t^{-1/3}\).
With
\[
P_\parallel=ee^{\mathsf T},\qquad
P_\perp=I_d-P_\parallel,
\]
set
\[
L_t=t^{-1/3}P_\perp+t^{-1/2}P_\parallel
\]
and explicitly choose
\[
a_t(\xi_*+y)
=b\!\left(t^{1/3}P_\perp y+t^{1/2}P_\parallel y\right).
\]
Here \(b\) is fixed, nonnegative, nonzero, and supported
sufficiently close to the origin.
For \(Z=z+re\), the limiting phase is
\[
tF(L_tZ)
\longrightarrow
4dr^2-\frac{A_*}{3}\sum_{j=1}^dz_j^3.
\]
Indeed, the undisplayed cubic terms contain at least
one normal variable and vanish after rescaling, as
does the fourth-order remainder.
%Since
%\[
%\det L_t=t^{-1/2-(d-1)/3},
%\]
Lemma~\ref{lem-multiplier}(2) then yields
\[
\sup_{x\in\mathbb Z^d}|G_{d,\gamma}(x,t)|
\gtrsim t^{-1/2-(d-1)/3}
=t^{-(2d+1)/6}.
\]
%No full asymptotic expansion on a fixed neighborhood for the tangential cubic is required.

For \(d=2\), write $y=(u+z,u-z)$ and use the scales
\[
u=t^{-1/2}U,\qquad z=t^{-1/4}Z.
\]
The limiting phase is
\[
tF(t^{-1/2}U+t^{-1/4}Z,\,
     t^{-1/2}U-t^{-1/4}Z)
\longrightarrow 16U^2-2A_*UZ^2.
\]
Take
\[
a_t(\xi_*+(u+z,u-z))
=b(t^{1/2}u,t^{1/4}z),
\]
with
\[
L_t=
\begin{pmatrix}1&1\\1&-1\end{pmatrix}
\operatorname{diag}(t^{-1/2},t^{-1/4}).
\]
Again choosing \(b\) as in part (2), we obtain
\[
\sup_{x\in\mathbb Z^2}|G_{2,\gamma}(x,t)|
\gtrsim t^{-3/4}.
\]

\subsection{Logarithmic bound: dimension two}
\label{sec-log-two}

Let \(d=2\) and \(\gamma=-8\).
Near the mixed corner \(\xi_*=(0,\pi)\), write
\[
\xi=(\theta_1,\pi+\theta_2).
\]
The local coordinates
\[
u=2\sin(\theta_1/2),\qquad
v=2\sin(\theta_2/2)
\]
satisfy the exact identity
\[
\omega(\xi)^2-4=u^2-v^2.
\]
Set \(U=u+v\) and \(V=u-v\). Then
\[
\phi_{-8}(\xi)
=(\omega(\xi)^2-4)^2-16
=U^2V^2-16.
\]
Write the resulting fixed local coordinate map as
\[
\xi=\kappa(U,V),\qquad
J_\kappa(U,V)=|\det D\kappa(U,V)|>0.
\]
Choose a fixed nonnegative function
\(b\in C_c^\infty(\mathbb R^2)\), supported sufficiently
close to the origin, with \(b(0,0)>0\), and define
\[
a_t(\kappa(U,V))
=\frac{b(U,V)}{J_\kappa(U,V)}.
\]
After extension by zero, this is a time-independent
smooth multiplier.
In Lemma~\ref{lem-multiplier}(1), take
\[
L_t=I_2,\qquad b_t(y)=a_t(\xi_*+y).
\]

At \(x=0\), the Jacobian cancels the denominator in
the multiplier, giving
\[
G_{a_t}(0,t)
=(2\pi)^{-2}e^{-16it}
\int_{\mathbb R^2}e^{itU^2V^2}b(U,V)\,dU\,dV.
\]
The monomial asymptotic is
\[
\int e^{itU^2V^2}b(U,V)\,dU\,dV
=c\,b(0,0)t^{-1/2}\log t
+o(t^{-1/2}\log t),
\qquad c\ne0.
\]
It follows by stationary phase in \(V\), and then by
integration over \(t^{-1/2}\lesssim|U|\lesssim1\).
%it is also the two-dimensional vertex asymptotic in \cite[Theorem 2.8]{C24}.
Thus Lemma~\ref{lem-multiplier}(1) gives
\[
\sup_{x\in\mathbb Z^2}|G_{2,-8}(x,t)|
\gtrsim t^{-1/2}\log t.
\]

\subsection{Logarithmic bound: dimension four}
\label{sec-log}

We begin with the asymptotic analysis for a specific phase function. A similar estimate was proved in \cite{K94}. For a straightforward proof, see Appendix \ref{ssec-xyz}. 
%The small-phase lower bound \(t^{-3/2}\) alone does not recover the necessary logarithm. We use a time-dependent complex multiplier whose Fourier coefficients remain uniformly summable. This multiplier tests the original kernel through Lemma~\ref{lem-multiplier}(1).

\begin{lemma}
\label{lem-xyz}
Let \(\eta\in C_c^\infty(\mathbb R)\) be real and even,
with \(\eta(0)>0\). Define
\[
H(\Lambda)
=\int_{\mathbb R^3}
e^{i\Lambda uvw}\eta(u)\eta(v)\eta(w)\,du\,dv\,dw.
\]
As \(|\Lambda|\to\infty\),
\[
H(\Lambda)
=\frac{4\pi\eta(0)^3}{|\Lambda|}\log|\Lambda|
+O(|\Lambda|^{-1}).
\]
\end{lemma}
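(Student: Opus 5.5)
The plan is to integrate out one variable exactly, which turns the triple integral into a two-dimensional integral against a Schwartz function of the product $\Lambda uv$; the logarithm then comes from a scaling substitution. Since $\eta$ is real and even, $H(-\Lambda)=\overline{H(\Lambda)}$, and the claimed leading term is real. So it suffices to treat $\Lambda\to+\infty$.

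\textbf{Step 1: integrate out $w$.} Set $\widehat\eta(\sigma)=\int_{\R}e^{i\sigma w}\eta(w)\dd w$. This function is Schwartz, real and even, and satisfies $\int_{\R}\widehat\eta=2\pi\eta(0)$. Integrating in $w$ first gives
\[
H(\Lambda)=\int_{\R^2}\eta(u)\eta(v)\widehat\eta(\Lambda uv)\dd u\dd v .
\]
The integrand is invariant under $u\mapsto -u$ and under $v\mapsto -v$, so
\[
H(\Lambda)=4\int_0^\infty\!\!\int_0^\infty\eta(u)\eta(v)\widehat\eta(\Lambda uv)\dd v\dd u .
\]

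\textbf{Step 2: split in $u$ at $u=1/\Lambda$.}
\begin{itemize}
\item On $0<u<1/\Lambda$, bound $|\widehat\eta|$ by a constant and $v$ ranges over the fixed support of $\eta$. This piece is $O(\Lambda^{-1})$.
\item On $u\ge 1/\Lambda$, substitute $v=s/(\Lambda u)$. The inner integral becomes
\[
\frac{1}{\Lambda u}I(u),\qquad I(u)=\int_0^\infty\eta\!\left(\frac{s}{\Lambda u}\right)\widehat\eta(s)\dd s .
\]
Write $\eta(s/(\Lambda u))=\eta(0)+O\bigl(s/(\Lambda u)\bigr)$, using that $\eta'$ is bounded. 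Evenness gives $\int_0^\infty\widehat\eta=\pi\eta(0)$. Since $\int_0^\infty s|\widehat\eta(s)|\dd s<\infty$, we get
\[
I(u)=\pi\eta(0)^2+O\bigl((\Lambda u)^{-1}\bigr).
\]
\end{itemize}

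\textbf{Step 3: assemble the pieces.} The main contribution is
\[
\frac{4\pi\eta(0)^2}{\Lambda}\int_{1/\Lambda}^\infty\frac{\eta(u)}{u}\dd u
=\frac{4\pi\eta(0)^2}{\Lambda}\bigl(\eta(0)\log\Lambda+O(1)\bigr).
\]
This holds because $\eta(u)-\eta(0)\mathbf 1_{u\le1}=O(u)$ near $0$, and $\eta$ is compactly supported. The error term is
\[
\Lambda^{-1}\int_{1/\Lambda}^\infty\eta(u)\,u^{-1}(\Lambda u)^{-1}\dd u
\lesssim \Lambda^{-2}\int_{1/\Lambda}^\infty u^{-2}\dd u=O(\Lambda^{-1}).
\]
Together these give $H(\Lambda)=4\pi\eta(0)^3\Lambda^{-1}\log\Lambda+O(\Lambda^{-1})$.

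\textbf{Expected obstacle.} The only delicate step is the uniform control of $I(u)$ near the threshold $u\sim1/\Lambda$. There the Taylor error $O(s/(\Lambda u))$ is not small, but it stays bounded by a constant times $\int s|\widehat\eta|$, and the $u$-region where it matters has logarithmic measure $O(1)$. This is exactly what keeps the remainder at $O(\Lambda^{-1})$ rather than $O(\Lambda^{-1}\log\Lambda)$.
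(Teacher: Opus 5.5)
Your proposal is correct and follows essentially the same route as the paper: integrate out $w$ to get $\widehat\eta(\Lambda uv)$, fold to the quadrant by evenness, split at $u=\Lambda^{-1}$, rescale $v$, and combine $\int_0^\infty\widehat\eta=\pi\eta(0)$ with $\int_{\Lambda^{-1}}^\infty \eta(u)u^{-1}\,du=\eta(0)\log\Lambda+O(1)$. The differences are cosmetic: you make the $O((\Lambda u)^{-2})$ remainder explicit through $\int_0^\infty s|\widehat\eta(s)|\,ds<\infty$, and you handle $\Lambda<0$ by complex conjugation rather than by the paper's reflection $u\mapsto-u$.
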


Now let \(d=4\) and \(\gamma\in\mathcal E_4\).
Use the notation of \eqref{eq-cornerexpand}, so
\[
\xi_*=(\pi/2,\pi/2,\pi/2,\pi/2),\qquad
A_*=16+\gamma\ne0.
\]
Write \(z\in e^\perp\) as
\[
z=(a,b,c,-a-b-c).
\]
The identity
\[
\sum_{j=1}^4z_j^3=-3(a+b)(a+c)(b+c)
\]
suggests the invertible linear coordinates
\[
u=a+b,\qquad v=a+c,\qquad w=b+c.
\]
In these coordinates, the phase increment is exactly
\begin{equation}
\label{eq-fourphase}
F(r,u,v,w)=16r^2+A_*uvw+R(r,u,v,w).
\end{equation}

Assign weight \(1/2\) to \(r\), and weight \(1/4\) to
each of \(u,v,w\). Indeed every Taylor monomial of \(R\) has weight at least one.
%Indeed, the full quadratic part has already been displayed; every remaining cubic term contains \(r\), and every term of ordinary degree at least four has weight at least one.
Since \(R\) is analytic, on a fixed sufficiently small box the functions
\begin{equation}
\label{eq-Rbounded}
tR(t^{-1/2}\rho,t^{-1/4}U,t^{-1/4}V,t^{-1/4}W)
\end{equation}
have uniformly bounded derivatives of every fixed order.

Let \(x_t\in\mathbb Z^4\) be obtained by taking a nearest integer to each coordinate of \(tv_*\), and put
\(\delta_t=x_t-tv_*\).
Choose nonnegative even functions
\(\chi,\eta\in C_c^\infty(\mathbb R)\), supported
sufficiently close to zero, with \(\eta(0)>0\) and
\[
C_\chi
=\int_{\mathbb R}e^{16i\rho^2}\chi(\rho)\,d\rho\ne0.
\]
%For example, a nonzero \(\chi\ge0\) supported where \(16\rho^2<\pi/4\) has this property.

In the above linear coordinates, define
\begin{align}
\label{eq-corrector}
a_t(\xi_*+y)
={}&\chi(t^{1/2}r)
\eta(t^{1/4}u)\eta(t^{1/4}v)\eta(t^{1/4}w)
\notag\\
&\times
\exp\{-itR(r,u,v,w)-i\delta_t\cdot y\}.
\end{align}
This multiplier cancels both the remainder and the
rounding term in the localized integral.

To verify the assumptions of
Lemma~\ref{lem-multiplier}(1), let \(T_0\) denote the
fixed invertible linear map from \((r,u,v,w)\) to \(y\),
and take
\[
L_t=T_0\operatorname{diag}
(t^{-1/2},t^{-1/4},t^{-1/4},t^{-1/4}).
\]
For \(Z=(\rho,U,V,W)\), define
\[
b_t(Z)=a_t(\xi_*+L_tZ).
\]
Explicitly,
\begin{align*}
b_t(Z)
={}&\chi(\rho)\eta(U)\eta(V)\eta(W)\\
&\times\exp\bigl\{
-itR(t^{-1/2}\rho,t^{-1/4}U,
     t^{-1/4}V,t^{-1/4}W)
-i\delta_t\cdot L_tZ
\bigr\}.
\end{align*}
Its support is fixed.
Equation \eqref{eq-Rbounded}, the boundedness of
\(\delta_t\), and the boundedness of \(L_t\) imply
uniform bounds for every fixed derivative order.
Thus
\[
\sup_t\|k_t\|_{\ell^1}<\infty
\]
for the Fourier coefficients of this multiplier.

Evaluate \(G_{a_t}\) at \(x=x_t\), and remove the
constant phase
\[
\Theta_t=x_t\cdot\xi_*+t\phi_\gamma(\xi_*).
\]
By \eqref{eq-fourphase} and \eqref{eq-corrector}, the
remaining phase is exactly \(16tr^2+tA_*uvw\).
%With the fixed positive constant
%\[
%C_0=(2\pi)^{-4}|\det T_0|,
%\]
Then we obtain
\begin{align*}
e^{-i\Theta_t}G_{a_t}(x_t,t)
={}&C_0
\left(\int e^{16itr^2}\chi(t^{1/2}r)\,dr\right)\\
&\times
\left(\int e^{itA_*uvw}
\eta(t^{1/4}u)\eta(t^{1/4}v)\eta(t^{1/4}w)
\,du\,dv\,dw\right)\\
={}&C_0 C_\chi\,t^{-5/4}H(A_*t^{1/4}).
\end{align*}
Lemma~\ref{lem-xyz} therefore gives
\[
|G_{a_t}(x_t,t)|
=c_{\gamma,\chi,\eta}\,t^{-3/2}\log t
+O(t^{-3/2}),
\qquad c_{\gamma,\chi,\eta}>0.
\]
Applying Lemma~\ref{lem-multiplier}(1) yields
\begin{equation*}
\sup_{x\in\mathbb Z^4}|G_{4,\gamma}(x,t)|
\gtrsim t^{-3/2}\log t,
\qquad \gamma\in\mathcal E_4.
\end{equation*}
%Together with the upper bounds proved earlier, these lower bounds establish sharpness in all the cases treated in this section. Negative times follow from
%\[
%G_{d,\gamma}(x,-t)=\overline{G_{d,\gamma}(-x,t)}.
%\]
%Finally, the convolution operator norm from
%\(\ell^1(\mathbb Z^d)\) to \(\ell^\infty(\mathbb Z^d)\) equals \(\|G_{d,\gamma}(\cdot,t)\|_{\ell^\infty}\).

\section{Appendix}\label{sec-app}

\subsection{The discrete setting}\label{ssec-disc set}
Recall that $\mathbb{Z}^d$ denotes the standard integer lattice graph in $\mathbb{R}^d$. A function $h:\Z^d \rightarrow \mathbb C$ is said to be rapidly decreasing or to belong to the Schwartz class if $\sup_{x \in \Z^d} |x|^k|h(x)| \leq C_k$ for $k = 0,1,2,\cdots$. For such a function, we define its Fourier transform by
\begin{equation*}
    \mathcal{F}(h)(\xi)=\Hat{h}(\xi) := \sum_{x\in \mathbb{Z}^d} e^{-i\xi \cdot x}h(x),~~\xi \in \mathbb{T}^d.
\end{equation*}

The inverse transform for a smooth function $h: \mathbb{T}^d \rightarrow \mathbb C$ is defined by
\begin{equation*}
    \mathcal{F}^{-1}(h)(x)=\Check{h}(x) := \frac{1}{(2\pi)^d} \int_{\mathbb{T}^d} e^{i \xi \cdot x} h(\xi) d\xi,~~x\in \mathbb{Z}^d.
\end{equation*}
We remark here that the Fourier transform and its inverse can be defined for more general functions. The convolution property and the Plancherel identity can be proved similarly. See e.g. \cite{W11} for more facts about discrete Fourier analysis. Applying the Fourier transform to both sides of (\ref{equ-DFS}) (with $F \equiv 0$), we get
\begin{equation*}
    \left\{
        \begin{aligned}
            & \partial_t \Hat{u}(\xi,t) = i(\omega^4+\gamma \omega^2) \Hat{u}(\xi,t),\\
            & \Hat{u}(\xi,0) = \Hat{f}(\xi).
        \end{aligned}
    \right.
\end{equation*}
For any fixed $\xi$, the solution to this ordinary differential equation is 
\begin{equation*}
    \Hat{u}(\xi,t) = e^{i(\omega^4+\gamma \omega^2)t} \hat{f}(\xi).
\end{equation*}
Therefore we have
\begin{equation}\label{equ-semigroup solution}
    u(x,t)=\frac{1}{(2\pi)^d} \int_{\mathbb{T}^d} e^{i \xi \cdot x+i(\omega^4+\gamma \omega^2)t} \hat{f}(\xi) d\xi.
\end{equation}

\subsection{Uniform estimates for oscillatory integrals}\label{ssec-general OI}

The following notions are taken from \cite{K83}. In the sequel, let $B_{\R^d}(\xi,r)$ (resp. $B_{\mathbb C^d}(\xi,r)$) be the usual open ball in $\R^d$ (resp. $\mathbb C^d$) with center $\xi$ and radius $r$, while $\overline{B}_{\R^d}(\xi,r)$ (resp. $\overline{B}_{\mathbb C^d}(\xi,r)$) denotes its closure.

\begin{definition}
    For $r,s>0$, let $H_r(s)$ denote the class of complex-valued functions $P$ that are holomorphic on $B_{\mathbb C^d}(0,r)$, continuous on $\overline{B}_{\mathbb C^d}(0,r)$, and satisfy $|P(w)|<s$, $\forall \, w\in \overline{B}_{\mathbb C^d}(0,r)$.
\end{definition}
    
For any given $r,s>0$, one may verify that real polynomials belong to $\mathcal{H}_{r}(s)$ if their coefficients are sufficiently small.

\begin{definition}\label{def-uniform estimate}
    Suppose that $h:\mathbb{R}^d \rightarrow \mathbb{R}$ is real-analytic at 0 and $(\beta,p)\in\R\times\mathbb N$. We write 
    \begin{equation*}
        M(h) \curlyeqprec (\beta,p)
    \end{equation*} 
    if, for every sufficiently small $r > 0$, there exist $\epsilon>0$, $C>0$ and a neighborhood $A\subset B_{\R^d}(0,r)$ of the origin such that
    \begin{equation*}
         \bigg| \int_{\mathbb{R}^d}e^{it(h(x)+P(x))}\psi(x) dx \bigg| \leqslant C (1+|t|)^{\beta} \log^p(2+|t|) \|\psi\|_{C^N(A)}
    \end{equation*}
    holds for any $t \in \R$, $\psi \in C^{\infty}_0(A)$ and $P\in \mathcal{H}_r(\epsilon)$. Here $N \in \mathbb N$ only depends on $d$, and $\|\psi\|_{C^N(A)}:=\sup\{|\partial^{\gamma}\psi(x)|:x\in A,\gamma\in\mathbb N^d,|\gamma|\leq N\}.$
\end{definition}

By $M(h,a) \curlyeqprec (\beta,p)$ we mean that $h_a(x):=h(x+a)$ has a uniform estimate at 0 with exponent pair $(\beta,p)$, i.e. $M(h_a) \curlyeqprec (\beta,p)$.

One-dimensional uniform estimate follows from the van der Corput lemma, see e.g. \cite[Section 8]{S93}.

\begin{theorem}\label{lemma-van der corput}
    Suppose that $h$ is a real-valued smooth function, and $\psi$ is complex-valued and smooth. Suppose also that $|h^{(k)}(x)| \geqslant 1$ for some $k \geqslant 2$ and all $x \in (a,b)$. Then we can conclude that
    \begin{equation*}
        \bigg| \int_{a}^{b} e^{i\lambda h(x)} \psi(x)dx \bigg| \leqslant c_k \lambda^{-1/k} \left[|\psi(b)|+ \int_a^b \psi'(x)dx\right].
    \end{equation*}
    The constant $c_k = 5 \cdot 2^{k-1}-2$ is independent of $h$, $\psi$ and $\lambda$. 
\end{theorem}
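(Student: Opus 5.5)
We plan to follow the classical route (as in \cite[Section 8]{S93}): first prove the estimate with $\psi\equiv1$ by induction on $k$, tracking the constants, and then insert the amplitude by one integration by parts. Throughout, $\lambda>0$ (for $\lambda<0$ take complex conjugates). The term $\int_a^b\psi'(x)\dd x$ in the statement will be read as $\int_a^b|\psi'(x)|\dd x$, which is what the argument produces.

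\emph{Base case.} Suppose $h'$ is monotone on $(a,b)$ and $|h'|\ge1$ there. Write $e^{i\lambda h}=\frac{1}{i\lambda h'}\frac{d}{dx}e^{i\lambda h}$ and integrate by parts. The boundary terms contribute at most $2/\lambda$. The remaining integral is bounded by
\[
\lambda^{-1}\int_a^b\left|\frac{d}{dx}\frac{1}{h'}\right|\dd x=\lambda^{-1}\left|\frac{1}{h'(b)}-\frac{1}{h'(a)}\right|\le\lambda^{-1},
\]
where the equality holds because $1/h'$ is monotone. Hence $|\int_a^b e^{i\lambda h}\dd x|\le 3\lambda^{-1}$, that is, $c_1=3$. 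This case is used only as the base of the induction.

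\emph{Induction step.} Assume $|h^{(k+1)}|\ge1$ on $(a,b)$; by symmetry $h^{(k+1)}\ge1$. Then $h^{(k)}$ is strictly increasing, so $|h^{(k)}|<\delta$ on at most one interval of length at most $2\delta$. This interval contributes at most $2\delta$ trivially. Off it, $(a,b)$ splits into at most two intervals on which $|h^{(k)}|\ge\delta$. On each of these we apply the inductive hypothesis to $h/\delta$ with frequency $\lambda\delta$. For $k=1$ this is legitimate because $h'$ is monotone there, which the base case requires. Each piece contributes at most $c_k(\lambda\delta)^{-1/k}$. Choosing $\delta=\lambda^{-1/(k+1)}$ gives
\[
\left|\int_a^b e^{i\lambda h}\dd x\right|\le(2c_k+2)\lambda^{-1/(k+1)}.
\]
The recursion $c_{k+1}=2c_k+2$ with $c_1=3$ solves to $c_k=5\cdot2^{k-1}-2$; for instance $c_2=8$, matching the claimed constant. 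These bounds hold on every subinterval of $(a,b)$, uniformly in the endpoints.

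\emph{Amplitude.} Set $F(x)=\int_a^x e^{i\lambda h(y)}\dd y$. By the previous step, $|F(x)|\le c_k\lambda^{-1/k}$ for all $x\in[a,b]$. Integrating by parts,
\[
\int_a^b e^{i\lambda h}\psi\dd x=F(b)\psi(b)-\int_a^b F\psi'\dd x,
\]
so the integral is bounded by $c_k\lambda^{-1/k}\bigl(|\psi(b)|+\int_a^b|\psi'|\dd x\bigr)$, which is the claim.

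The only delicate point is the bookkeeping in the induction step. One must note that the bad set where $|h^{(k)}|<\delta$ is a single interval, because $h^{(k)}$ is monotone, and that the monotonicity hypothesis needed at level $k=1$ is inherited from $h''\ge1$. With these two observations the constant $5\cdot2^{k-1}-2$ follows directly from the recursion.
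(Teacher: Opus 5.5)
Your argument is correct, and it is the standard proof from \cite[Section 8]{S93}, which the paper cites instead of proving the statement: the monotone $k=1$ base case with $c_1=3$, the induction $c_{k+1}=2c_k+2$ using the single bad interval of length at most $2\delta$ with $\delta=\lambda^{-1/(k+1)}$, and one integration by parts against $F(x)=\int_a^x e^{i\lambda h(y)}\,dy$. You are also right to read the last term as $\int_a^b|\psi'(x)|\,dx$ and $\lambda$ as $|\lambda|$: as printed, the right-hand side vanishes for every $\psi$ with $\psi(a)=\psi(b)=0$, so the literal inequality cannot hold.
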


\subsection{Newton polyhedra}\label{ssec-Newton polyhedra}
We summarize the Newton-polyhedron method developed in \cite{V76}. 

Without loss of generality, we can and will assume that $S:\R^d \rightarrow \R$ is an analytic function defined in a neighborhood of the origin with
\begin{equation*}
    S(0)=0\,\,\,\mbox{and}\,\,\,\nabla S(0)=0. 
\end{equation*}
$S$ can be expressed as a uniformly and absolutely convergent series, so we consider the associated Taylor expansion at the origin
\begin{equation*}
    S(x) = \sum_{m \in \mathbb{N}^d} s_{m} x^{m}.
\end{equation*}
The set
\begin{equation*}
    \mathcal{T}(S) := \{m \in \mathbb{N}^d :s_{m} \neq 0 \}
\end{equation*} 
is called the Taylor support of $S$, and we will always assume $\mathcal{T}(S) \neq \varnothing$. The Newton polyhedron of $S$, denoted by $\mathcal{N}(S)$, is the convex hull of the set
\begin{equation*}
    \bigcup_{m \in \mathcal{T}(S)} \left(m + \mathbb{R}^d_{+}\right),
\end{equation*}
where $\R^d_+=\{x\in \R^d:x_j\geq 0, \, 1 \leq j\leq d\}$. Given a compact face $\mathcal{P}$ of $\mathcal{N}(S)$, we write
\begin{equation*}
    S_{\mathcal{P}}(x) = \sum_{m \in \mathcal{P}}s_{m} x^{m}. 
\end{equation*}
Note that an edge or a vertex is also considered as a face. We say that $S$ is $\mathbb{R}$-nondegenerate if $\nabla S_{\mathcal{P}}(x)$ is nonvanishing on $(\mathbb{R}\backslash\{0\})^d$ for all compact faces $\mathcal{P}$ of $\mathcal{N}(S)$, that is,
\begin{equation*}
   \bigcup_{\mathcal{P}}\left( \bigcap_{j=1}^d \left\{x:{\partial_{j}S_{\mathcal{P}}}(x)=0\right\}\right)\subset\; \bigcup_{j=1}^d \{x:x_j=0\}.
\end{equation*}

The {Newton distance} $d_S$ of $\mathcal{N}(S)$ is defined by
\begin{equation*}
        d_S = \inf \{\rho>0:(\rho,\rho,\cdots,\rho) \in \mathcal{N}(S)\} ,
\end{equation*}
and the principal face $\pi(S)$ of $\mathcal{N}(S)$ is the face of minimal dimension containing the point 
$\boldsymbol{d_S}=(d_S,d_S,\cdots,d_S)$. We shall call the series
\begin{equation*}
    S_{pr}:=\sum_{m \in \pi(S)} s_m x^m
\end{equation*}
the principal part of $S$. If $\pi(S)$ is compact, $S_{pr}$ is a polynomial; otherwise, we consider $S_{pr}$ as a formal power series.

Following \cite{IKM05} and \cite{IM11-TAMS}, we also define the order of $S$ at a point $y$, denoted by ord$S(y)$, as the smallest integer $j$ such that $D^j S(y) \neq 0$, where $D^j S$ denotes the $j$th-order total derivative of $S$. If $S: \R^2 \rightarrow \R$, we let
\begin{equation*}
    m(S):= \max_{y \in \mathbb{S}^1} \mathrm{ord}S(y)
\end{equation*}
be the maximal order of $S$ along the unit circle in $\R^2$.

In his seminal work \cite{V76}, Varchenko proved that the leading term of the asymptotic expansion of an oscillatory integral can be determined from the Newton polyhedron of its phase function under certain conditions. See also \cite[Theorem 2.3]{G18}.

\begin{theorem}\label{theorem-newton polyhedra}
    Suppose $S$ is $\mathbb{R}$-nondegenerate and $\psi: \mathbb{R}^d \rightarrow \mathbb{R}$ is smooth and supported sufficiently close to the origin. Let $k_0\in\N$ be the greatest codimension over all faces of $\mathcal{N}(S)$ containing the point $\boldsymbol{d_S}$, i.e. $k_0=d-dim(\pi(S))$. Then
    \begin{equation*}
       \left|\int_{\R^d}e^{itS(x)}\psi(x)\,dx\right|\leqslant C (1+|t|)^{-\frac{1}{d_S}} \log^{k_0-1}(2+|t|).
    \end{equation*}
\end{theorem}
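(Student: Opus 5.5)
The statement just above is Varchenko's Newton-polyhedron bound (Theorem \ref{theorem-newton polyhedra}). The plan is the standard toric-resolution argument: decompose a neighbourhood of the origin into pieces adapted to the fans of $\mathcal N(S)$, reduce each piece to a monomial-type integral, and read off the decay from the worst monomial.

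First I reduce to the positive orthant. Split $\psi$ by a partition of unity subordinate to the $2^d$ closed orthants, smoothed near the coordinate hyperplanes by dyadic cutoffs in each variable. The reflection $x_j\mapsto -x_j$ preserves both $\mathcal N(S)$ and $\mathbb R$-nondegeneracy, so it suffices to treat $x\in(0,\delta)^d$.

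Next, choose a simplicial refinement of the normal fan of $\mathcal N(S)$ into regular cones. For each cone, the toric chart $x=y^{M}$ (monomial map with unimodular matrix $M$) converts $S$ into
\[
y^{\mathbf a}\,\tilde S(y),
\]
where $\mathbf a$ is determined by the supporting functional, and $\tilde S(0)$ is the face polynomial $S_{\mathcal P}$ evaluated on the corresponding torus orbit. The Jacobian of the chart is $y^{\mathbf b}$, with $b_j+1=\langle M e_j,\mathbf 1\rangle$.

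The $\mathbb R$-nondegeneracy hypothesis is what makes this work. On the exceptional divisors, $\tilde S$ either does not vanish, or vanishes with nonzero gradient transversally to the divisor. In the second case, near such a point one can take $\tilde S$ itself as a new coordinate, so each local piece becomes
\[
\int e^{it y^{\mathbf a} z}\,y^{\mathbf b}\,\phi(y,z)\,dy\,dz \qquad\text{or}\qquad \int e^{it y^{\mathbf a}}\,y^{\mathbf b}\,\phi(y)\,dy,
\]
where $\phi$ is smooth and compactly supported and may carry a factor $z^{c}$. Such monomial integrals are estimated directly, by dyadic decomposition in each $y_j$ together with integration by parts in the variable carrying the largest ratio. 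The result is
\[
O\bigl(t^{-\min_j (b_j+1)/a_j}\log^{m-1}t\bigr),
\]
where $m$ is the number of indices attaining the minimum.

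The final step is bookkeeping. For each ray $Me_j$, the ratio $(b_j+1)/a_j$ equals $\langle Me_j,\mathbf 1\rangle/\min_{\mathcal N}\langle Me_j,\cdot\rangle$. This is at least $1/d_S$, with equality exactly when the ray supports a face containing $\boldsymbol{d_S}$. The number of such rays within one cone is at most the codimension of the face they jointly cut out, which is at most $k_0$. Hence the logarithm power is at most $k_0-1$.

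I expect the main obstacle to be the middle step. One must check that the resolution chart actually produces normal crossings $y^{\mathbf a}\tilde S$ with $\tilde S$ of the required form at every point of the exceptional set, including orbits of intermediate dimension. One must also check that the resulting bounds are uniform over the finitely many charts; since the charts are finite and the cutoffs fixed, uniformity in $\psi$ follows. For points where several faces meet, I would first try an induction on the dimension of the orbit.
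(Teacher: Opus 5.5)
The paper does not prove this statement; it quotes the bound from Varchenko \cite{V76}. Your plan is precisely Varchenko's toric-resolution argument and is correct in substance: orthant reduction, a regular refinement of the normal fan of $\mathcal N(S)$, normal crossings from $\mathbb R$-nondegeneracy, dyadic estimates for the monomial integrals, and the observation that the rays of one regular cone attaining the ratio $1/d_S$ are linearly independent vectors in the $k_0$-dimensional normal cone of $\pi(S)$.

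Two steps are simpler than you expect:
\begin{itemize}
\item No dyadic smoothing at the coordinate hyperplanes is needed. You can integrate over closed orthants directly, because the pulled-back amplitude is smooth up to the boundary of each chart.
\item No induction over orbits is needed. Euler's identity $\sum_j\kappa_jx_j\partial_jS_{\mathcal P}=\bigl(\min_{\mathcal N}\langle\kappa,\cdot\rangle\bigr)S_{\mathcal P}$, for $\kappa$ in the normal cone of $\mathcal P$, shows that at real torus zeros of $S_{\mathcal P}$ the nonzero logarithmic gradient annihilates that cone. Hence $\tilde S$ has nonzero differential along every exceptional orbit at once.
\end{itemize}
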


\subsection{Integrals in two variables}\label{ssec-2dim OI}

In this part we focus on integrals on $\R^2$. From the definitions above, we may find that the Newton distance depends on the chosen local coordinate system in which $S$ is expressed. Based on this, the height of $S$ is defined by 
\begin{equation*}
    ht_S :=\sup \{d_{S,x}\},
\end{equation*}
where the supremum is taken over all local analytic coordinate systems preserving 0 and $d_{S,x}$ is the Newton distance in coordinates $\{x\}$. A given coordinate system $\{x_0\}$ is said to be adapted to $S$ if $d_{S,x_{0}} = ht_S$. 

For a certain function, it is necessary to ask whether an adapted coordinate system exists. Varchenko \cite{V76} proved the existence of such a coordinate system for analytic functions (without multiple components) in the two-dimensional case and
showed that the analogous statement fails in higher dimensions. Also in dimension two, Ikromov and M\"{u}ller \cite{IM11-TAMS} extended Varchenko's result to the smooth setting. Therefore, we may and will assume that $S$ is expressed in an adapted coordinate system from now on. 

They also gave necessary and sufficient conditions for the adaptedness of a given system, see \cite[Corollary 2.3 and 4.3]{IM11-TAMS}.
\begin{theorem}\label{theorem-jugde adapted coordinate}
    The coordinates $\{x_0\}$ are adapted to $S$ if and only if one of the following conditions is satisfied:
    \begin{itemize}
        \item[(a)] $\pi(S)$ is a compact edge, and $m(S_{pr}) \leq d_{S,x_0}$.
        \item[(b)] $\pi(S)$ is a vertex.
        \item[(c)] $\pi(S)$ is an unbounded edge.
    \end{itemize}
\end{theorem}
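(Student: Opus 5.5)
This criterion is quoted from \cite[Corollaries 2.3 and 4.3]{IM11-TAMS}. If we were to reprove it, the plan is to compare the Newton distance \(d_{S,x_0}\) with a coordinate-invariant quantity. The natural candidate is the decay rate of the sublevel volume \(|\{x\in U:|S(x)|<\varepsilon\}|\), or equivalently the oscillation index of \(\int e^{itS}\psi\). Write this invariant as \(\varepsilon^{1/h}\log^{p}(1/\varepsilon)\), with \(h=ht_S\) taken as known from Varchenko's existence of adapted coordinates. Then \(\{x_0\}\) is adapted if and only if the sublevel volume in these coordinates is \(\lesssim\varepsilon^{1/d_{S,x_0}}\), up to logarithms.

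\textbf{Lower bound in any coordinates.} Let \(\kappa=(\kappa_1,\kappa_2)\) be the weight of a supporting line of \(\mathcal N(S)\) through \(\boldsymbol{d_S}\), so that \(\kappa_1+\kappa_2=1/d_S\). On the box \(|x_1|\le c\varepsilon^{\kappa_1}\), \(|x_2|\le c\varepsilon^{\kappa_2}\) every Taylor monomial of \(S\) is \(O(\varepsilon)\). The box therefore has volume \(\approx\varepsilon^{1/d_{S,x_0}}\), and hence \(d_{S,x_0}\le ht_S\) always.

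\textbf{Sufficiency.} It suffices to prove the matching upper bound on the sublevel volume in the given coordinates.
\begin{itemize}
\item In cases (b) and (c), decompose a neighbourhood of \(0\) into \(\kappa\)-dyadic boxes \(\{|x_1|\sim2^{-\kappa_1 j},\ |x_2|\sim 2^{-\kappa_2 j}\}\). On each box the monomials of the principal face dominate the rest of the series: a single monomial \(x^{(d,d)}\) in case (b), and an unbounded edge \(x_1^{m}\cdot(\text{unit})\) in case (c). Summing the box contributions gives \(\varepsilon^{1/d_S}\log^{k_0-1}\).
\item In case (a), \(S_{pr}\) is \(\kappa\)-quasi-homogeneous. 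Away from its real root curves \(x_2=c\,x_1^{\kappa_2/\kappa_1}\) the same box argument works. Near a root of multiplicity \(m\), rescale by the weights and use a one-variable bound: along the transversal direction \(|S_{pr}|\gtrsim\operatorname{dist}^m\) on the rescaled box. The resulting contribution is bounded by \(\varepsilon^{1/d_S}\) precisely when \(m\le d_S\). The hypothesis \(m(S_{pr})\le d_{S,x_0}\) is exactly this condition.
\end{itemize}

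\textbf{Necessity.} Suppose \(\pi(S)\) is a compact edge and \(S_{pr}\) has a real root line of multiplicity \(m>d_S\), say along \(x_2=c\,x_1^{q}\) with \(q=\kappa_1/\kappa_2\) (or with the roles of the variables swapped). First show that \(q\) must be an integer. The reason is that the multiplicity \(m\) of a non-monomial root factor is at most \(\deg\) in each variable divided by \(q\)-related quantities, and the inequality \(m>d_S\) forces the edge to reach an axis in a way only compatible with \(q\in\N\).

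Then make the analytic change \(x_2\mapsto x_2+c\,x_1^{q}\). This turns \(S_{pr}\) into a quasi-homogeneous polynomial divisible by \(x_2^{m}\). The new Newton polygon loses the old principal edge, and the new point on the diagonal lies strictly further out, which gives \(d_{S,\tilde x}>d_{S,x_0}\). Hence the original coordinates were not adapted.

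\textbf{Main obstacle.} The main difficulty lies in the necessity step: proving the integrality of \(q\) and controlling the higher-order terms after the substitution. This is the combinatorial core of Varchenko's algorithm, and it is where we would lean on \cite{IM11-TAMS} rather than reprove the argument.
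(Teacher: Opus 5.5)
The paper gives no argument for this criterion; it simply quotes \cite[Corollaries 2.3 and 4.3]{IM11-TAMS}, so your first sentence already matches the paper's route. Your reproof sketch has a sound architecture. Sublevel growth is coordinate-invariant, and your box argument gives $\gtrsim\varepsilon^{1/d_{S,y}}$ in \emph{any} coordinates $y$. Hence an upper bound $\lesssim\varepsilon^{1/d_{S,x_0}}\log^p(1/\varepsilon)$ in the given coordinates already forces $d_{S,y}\le d_{S,x_0}$, and Varchenko's existence theorem is not needed. However, both concrete steps need repair, and the real difficulty is not where you place it.

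\textbf{Sufficiency (genuine gap).} For one fixed $\kappa$, the boxes $\{|x_1|\sim2^{-\kappa_1 j},\ |x_2|\sim2^{-\kappa_2 j}\}$ do not cover a punctured neighbourhood of $0$. On the part they miss, the principal-face monomials need not dominate. For example, $S=x_1^2x_2^2+x_1^6+x_2^6$ is in case (b) with $d_S=2$, yet $x_1^6$ wins on $|x_2|<x_1^2$. Your count shows the problem: summed over those boxes it gives $\varepsilon^{1/d_S}$ with no logarithm. The $\log(1/\varepsilon)$ of case (b) comes precisely from the neighbourhoods of the axes that were left out. In case (a), the bound $|S_{pr}|\gtrsim\operatorname{dist}^m$ concerns $S_{pr}$ only and does not survive the higher $\kappa$-degree remainder near the root; the zeros of $S_{pr}$ on the axes are also ignored.

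A version that works is as follows. Take $\kappa$ with $S_\kappa=S_{pr}$: the edge weight in case (a), an interior normal of the vertex in case (b), and $\kappa=(1/d_S,0)$ in case (c). Decompose into $\kappa$-homogeneous dyadic shells and rescale the $j$-th shell to a fixed compact one, on which $2^jS=S_{pr}+O_{C^N}(2^{-j\delta})$. At every zero of $S_{pr}$ there, axes included, apply the derivative form $|\{s:|f(s)|<\mu\}|\lesssim(\mu/\inf|f^{(M)}|)^{1/M}$ of the one-variable sublevel lemma, which is stable under the perturbation. The relevant orders satisfy $M\le d_S$ in all three cases: $M=m(S_{pr})$ in (a), $M=d_S$ on the axes in (b), and $M=B<d_S$ for $S_{pr}=x_1^{d_S}x_2^Bu(x_2)$ in (c). Then $\sum_j2^{-j/d_S}\min\{1,(\varepsilon2^j)^{1/M}\}\lesssim\varepsilon^{1/d_S}$, with one extra $\log(1/\varepsilon)$ exactly when $M=d_S$.

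\textbf{Necessity.} The logic is right: substitute along a root of multiplicity $n>d_S$ and watch the diagonal point leave the polygon. But integrality of the exponent does not come from the edge ``reaching an axis'', and indeed it need not. For instance, $S=x_1x_2(x_2-x_1)^3$ has principal edge $[(1,4),(4,1)]$ and $d_S=5/2<3$. You do have the right ingredient. Suppose $(x_2^q-\lambda x_1^p)^n$ divides $S_{pr}$, with $\gcd(p,q)=1$ and $\lambda\ne0$. All monomials of $S_{pr}$ lie on $\kappa_1m_1+\kappa_2m_2=1$ in the first quadrant, so $np\le1/\kappa_1$ and $nq\le1/\kappa_2$. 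Hence $1/d_S=\kappa_1+\kappa_2\le n^{-1}(1/p+1/q)$, and $n>d_S$ forces $\min\{p,q\}=1$. The curve is therefore $x_2=c\,x_1^{\kappa_2/\kappa_1}$ or its mirror image, not $x_1^{\kappa_1/\kappa_2}$. You should also note that when $(d_S,d_S)$ is interior to a compact edge, the orders of $S_{pr}$ on the axes are automatically $<d_S$, so $m(S_{pr})>d_S$ really does yield such an off-axis factor.

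Conversely, the step you call the main obstacle is easy. The map $x_2\mapsto x_2+c\,x_1^{p}$ is $\kappa$-homogeneous, so terms of $\kappa$-degree $>1$ stay of $\kappa$-degree $>1$. Therefore $\mathcal N(\tilde S)\subset\{\kappa\cdot m\ge1\}$, and $\mathcal N(\tilde S)\cap\{\kappa\cdot m=1\}$ is the convex hull of the support of $S_{pr}(x_1,x_2+c\,x_1^p)$, which lies in $\{m_2\ge n\}$. So $\mathcal N(\tilde S)$ meets the diagonal only beyond $(d_S,d_S)$, i.e.\ $d_{S,\tilde x}>d_{S,x_0}$, with no separate control of higher-order terms required.
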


%From Theorem \ref{theorem-jugde adapted coordinate}, we deduce that if a coordinate system is not adapted to $S$, then $\pi(S)$ is a compact edge. In $\R^2$, it means that $\pi(S)$ lies on a line $\kappa_1 x_1 + \kappa_2 x_2 =1$. Without loss of generality, we may assume $\kappa_2 \geq \kappa_1 \geq 0$. Then there exists a smooth function $\psi(x_1)$ of $x_1$ with $\psi(0)=0$ such that an adapted coordinate system $\{y_1,y_2\}$ is given by $y_1=x_1, y_2=x_2 - \psi(x_1)$. See e.g. \cite[Theorem 5.1]{IM11-TAMS}. Therefore, we may and will assume that $S$ is expressed in an adapted coordinate system from now on. 

We also need to determine Varchenko's exponent $\nu(S) \in \{0,1\}$ (see \cite[p. 1294]{IM11-JFAA}), which is also called the multiplicity of $S$ in \cite{K84}. If there exists an adapted coordinate system such that $\pi(S)$ is a vertex, and if $ht_S \geq 2$, then we let $\nu(S) = 1$; otherwise, we let $\nu(S) = 0$. 

\iffalse
However, this condition is not always easy to verify; a more accessible criterion is given in
\cite[Lemma 1.5]{IM11-JFAA}.

\begin{theorem}\label{theorem-judge nu}
    The following conditions on $S$ are equivalent:
    \begin{itemize}
        \item [(a)] There exists an adapted local coordinate system such that $\pi(S)$ is a vertex.
        \item [(b)] For any adapted local coordinate system $\{y\}$, either $\pi(S)$ is a vertex, or a compact edge and $m(S_{pr})=d_{S,y}$.
    \end{itemize}
\end{theorem}
\fi

%For oscillatory integrals in two variables, we do not need to verify the $\R$-nondegenerate condition for the phase as in Theorem \ref{theorem-newton polyhedra}. 
Under the assumption that $S$ is expressed in an adapted coordinate system, we obtain the stable decay estimate, see \cite[Theorem 1.1]{IM11-JFAA} and \cite[Theorem 2.1]{K84}. 

\begin{theorem}\label{theorem-2dim NP uniform}
    Suppose $S:\R^2 \rightarrow \R$ and $\psi: \mathbb{R}^2 \rightarrow \mathbb{R}$ is smooth and supported sufficiently close to the origin. Let $ht_S$ and $\nu(S)$ be defined as above. Then
    \begin{equation*}
       \left|\int_{\R^2}e^{itS(x)}\psi(x)\,dx\right|\leqslant C (1+|t|)^{-\frac{1}{ht_S}} \log^{\nu(S)}(2+|t|).
    \end{equation*}
    Furthermore, the stable estimate holds:
    \begin{equation*}
        M(S) \curlyeqprec (-1/{ht_S},\nu(S)).
    \end{equation*}
\end{theorem}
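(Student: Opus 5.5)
Since the statement is quoted from \cite[Theorem 1.1]{IM11-JFAA} and \cite[Theorem 2.1]{K84}, I would not rederive it. I would organise a proof along the lines of those works, taking as given the existence of adapted coordinates (Varchenko, Ikromov--M\"uller) and Theorem \ref{theorem-jugde adapted coordinate}. I assume $S$ is already written in adapted coordinates $\{x\}$, so that $d_{S,x}=ht_S$.

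\textbf{Step 1: the undisturbed estimate.} I would split into the three cases of Theorem \ref{theorem-jugde adapted coordinate}.
\begin{itemize}
\item[(b)] If $\pi(S)$ is a vertex $(m,m)$, then $ht_S=m$. I would decompose dyadically in $|x_1|\sim2^{-j}$, $|x_2|\sim2^{-k}$. On each box, $S$ is dominated by the vertex monomial $x_1^mx_2^m$ plus controlled terms. After rescaling each box to unit size, a one-variable van der Corput bound (Theorem \ref{lemma-van der corput}) in the variable with nonvanishing $m$-th derivative gives a bound of the form $2^{-j-k}\min\{1,(t2^{-m(j+k)})^{-1/m}\}$. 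Summing over $(j,k)$ produces exactly the factor $\log(2+|t|)$ when $m\ge2$, which is where $\nu(S)=1$ arises.
\item[(a), (c)] If $\pi(S)$ is an edge, I would use the weights $(\kappa_1,\kappa_2)$ of its supporting line $\kappa_1x_1+\kappa_2x_2=1$ and decompose into $\kappa$-homogeneous dyadic annuli. On each annulus the phase is a small perturbation of $2^{-l}S_{pr}$.
\begin{itemize}
\item Away from the real roots of $S_{pr}$, the integral is controlled by a derivative of order at most $m(S_{pr})$.
\item Near a root of multiplicity at most $m(S_{pr})\le ht_S$, the adaptedness condition is used. Either van der Corput applies with exponent $1/ht_S$, or a second, finer Newton decomposition of the root jet occurs. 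In adapted coordinates the Newton distance of that finer decomposition is no larger than $ht_S$.
\end{itemize}
Summing the annuli gives $(1+|t|)^{-1/ht_S}$ with no logarithm, matching $\nu(S)=0$.
\end{itemize}

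\textbf{Step 2: stability under perturbations $P\in\mathcal H_r(\epsilon)$.} Here I would follow Karpushkin. Because $P$ is holomorphic on a fixed complex ball and uniformly small, its Taylor coefficients are uniformly small by Cauchy estimates. Every inequality in Step 1 of the form ``some derivative $\partial^\alpha(S+P)$ is bounded below on a rescaled box'' therefore persists with constants independent of $P$. The one exception is that the low-order coefficients of $P$ can move the critical point. The hard part is that these coefficients are not small relative to the dyadic scales. They may shift roots of $S_{pr}$, or split a vertex into an edge. To handle this I would use a versal-deformation / Weierstrass-preparation argument in the variable $x_2$. One writes $S+P$, up to a nonvanishing factor, as a polynomial in $x_2$ whose coefficients depend analytically on $x_1$ and $P$. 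The key claim is that the height cannot increase under small deformations (upper semicontinuity of the oscillation index in dimension two). I would then show that each perturbed root cluster still satisfies a van der Corput bound with exponent at least $1/ht_S$, and with at most one logarithmic loss, which occurs only when $\nu(S)=1$.

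I expect this final point to be the main obstacle: ruling out a logarithm in the perturbed problem when $\nu(S)=0$, and ruling out worse exponents near coalescing roots. My first attempt would be induction on the multiplicity of the root cluster, using the Puiseux-type resolution of Ikromov--M\"uller. At each stage the local Newton distance of the perturbed jet stays at most $ht_S$, and the summation in the dyadic scale is geometric except in the vertex case.
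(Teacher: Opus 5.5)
The paper does not prove this statement: it quotes it from \cite[Theorem 1.1]{IM11-JFAA} and \cite[Theorem 2.1]{K84}. Your opening decision to cite and not rederive is exactly the paper's route, and it is all that is needed. The sketch you append, however, is not a sound outline, and parts of it are wrong rather than just incomplete.

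Most concretely, the logarithm is misattributed. You claim cases (a) and (c) give $(1+|t|)^{-1/ht_S}$ with no logarithm. Take $S=(x_1^2-x_2^2)^2$, the phase the paper itself uses for $d=2$, $\gamma=-8$.
\begin{itemize}
\item In the coordinates $(x_1,x_2)$ it is in case (a): $\pi(S)$ is the compact edge from $(4,0)$ to $(0,4)$, and $m(S_{pr})=2=d_S$.
\item Yet $\nu(S)=1$: in $u=x_1-x_2$, $v=x_1+x_2$ one has $S=u^2v^2$, with vertex principal face $(2,2)$.
\item Accordingly the integral is $\asymp t^{-1/2}\log t$ when $\psi(0)\ne0$.
\end{itemize}
Your own annular bookkeeping shows where the logarithm comes from. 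Near a root of $S_{pr}$ of multiplicity exactly $ht_S$, the annulus at level $l$ contributes $2^{-l/ht_S}(t2^{-l})^{-1/ht_S}=t^{-1/ht_S}$. This is independent of $l$ and is summed over about $\log t$ annuli. The annular sum is geometric only when every root multiplicity is strictly below $ht_S$.

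Case (b) has a similar bookkeeping error, plus two gaps.
\begin{itemize}
\item Your box bound $2^{-j-k}\min\{1,(t2^{-m(j+k)})^{-1/m}\}$ sums to $\sum_n(n+1)\min\{2^{-n},t^{-1/m}\}\approx t^{-1/m}\log^2 t$, not $t^{-1/m}\log t$.
\item To get a single logarithm you need rapid decay from integration by parts on boxes where the rescaled phase has no critical point. For $m=1$ even that leaves a spurious logarithm, since the truth is $t^{-1}$; there you need the Morse lemma.
\item $x_1^mx_2^m$ dominates only on boxes in the normal cone of that vertex. On the other boxes, other vertices and edges take over. The real roots of those edge polynomials are where adaptedness actually has to be used, and your sketch never treats them.
\end{itemize}

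Step 2 assumes its conclusion. ``The height cannot increase under small deformations'' is essentially Karpushkin's theorem. Even pointwise upper semicontinuity of the local index at perturbed critical points would not give constants uniform over $P\in\mathcal H_r(\epsilon)$. The Cauchy-estimate remark also gives nothing at small scales. Any term $\epsilon x^\beta$ with $\beta\notin\mathcal N(S)$ dominates $S$ on sufficiently small boxes, however small $\epsilon$ is.

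So present the statement as a cited black box, as the paper does, not as the outcome of this sketch.
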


\iffalse
Furthermore, the following result on stability of oscillatory integrals is valid, see \cite[Theorem 2.1]{K84}.

\begin{theorem}\label{theorem-2dim NP uniform}
    Let $S$, $\psi$, $ht_S$ and $\nu(S)$  be as in Theorem \ref{theorem-2 dim newton polyhedra}. Moreover, if the inequality
    \begin{equation*}
        \left| \int_{\mathbb{R}^2} e^{itS(x)} \psi(x)dx \right| \leqslant C(1+|t|)^{\beta_S} \log^{p_S}(2+|t|)
    \end{equation*}
    holds with some exponent pair $(\beta_S,p_S)$,
    then $M(S) \curlyeqprec (\beta_S,p_S)$.
    In particular, $M(S) \curlyeqprec (-1/{ht_S},\nu(S))$.
\end{theorem}
\fi

When $d>2$, it is generally difficult to establish uniform estimates. However, at noncritical points and nondegenerate critical points of the phase, the following estimates hold.
\begin{lemma}\label{lemma-regular and nondegenerate estimate}
    Suppose that $h:\R^d \rightarrow \R$ is real analytic at some point $x_0$.
    \begin{itemize}
        \item [(a)] If $\nabla h(x_0) \neq 0$, then $M(h,x_0) \curlyeqprec (-N,0)$ for any $N \in \N$.
        \item [(b)] If $x_0$ is a nondegenerate critical point of $h$, then $M(h,x_0) \curlyeqprec (-d/2,0)$.
    \end{itemize}
\end{lemma}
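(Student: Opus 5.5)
The statement to prove is Lemma \ref{lemma-regular and nondegenerate estimate}: the uniform estimates $M(h,x_0)\curlyeqprec(-N,0)$ at a regular point and $M(h,x_0)\curlyeqprec(-d/2,0)$ at a nondegenerate critical point. The plan is to translate $x_0$ to the origin and use the fact that a perturbation $P\in\mathcal H_r(\epsilon)$ is small in $C^m$ on a smaller real ball. By Cauchy estimates on $B_{\mathbb C^d}(0,r)$, for every $m$ we have $\|P\|_{C^m(\overline B_{\R^d}(0,r/2))}\le C_m\epsilon r^{-m}$. Hence for $\epsilon$ small, the first and second derivatives of $h+P$ stay within any prescribed distance of those of $h$ on $A=B_{\R^d}(0,r/2)$. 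All higher derivatives are bounded uniformly in $P$.

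\textbf{Case (a).} If $\nabla h(0)\ne0$, shrink $A$ so that $|\nabla h|\ge c>0$ on $A$, and choose $\epsilon$ so that $|\nabla(h+P)|\ge c/2$ there. Apply the standard nonstationary operator
\[
L=\frac{\nabla(h+P)\cdot\nabla}{it|\nabla(h+P)|^2}.
\]
Integrating by parts $N$ times with its transpose gives the bound $C_N|t|^{-N}\|\psi\|_{C^N(A)}$. The constant depends only on lower bounds for $|\nabla(h+P)|$ and upper bounds for finitely many derivatives of $h+P$, and both are uniform in $P$. For $|t|\le1$ we use the trivial bound. Strictly, the definition allows $N$ to depend only on $d$, so for each fixed exponent one takes that many derivatives. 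This matches the intended reading of $(-N,0)$.

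\textbf{Case (b).} If $\nabla h(0)=0$ and $\Hess h(0)$ is invertible, take $A$ small and $\epsilon$ small. The map $\nabla(h+P)$ is then a small $C^1$ perturbation of $\nabla h$, which is a diffeomorphism near $0$. So $h+P$ has at most one critical point $x_P$, with $|x_P|\lesssim\epsilon$, and $\Hess(h+P)$ is uniformly invertible on $A$. If there is no critical point in $A$, or if the critical point lies near the edge of the support, we still need a uniform argument. I would therefore avoid locating $x_P$ and instead use the uniform Morse lemma with parameters: $P$ ranges over a set that is precompact in $C^\infty(\overline A)$. Equivalently, cover $A$ by a dyadic decomposition in $|x-x_P|$, or simply apply the standard bound
\[
\left|\int e^{it\Phi}\psi\right|\le C|t|^{-d/2}\|\psi\|_{C^N},
\]
which is valid for phases with $|\det\Hess\Phi|\ge c$ and $\|\Phi\|_{C^{N+2}}\le C'$ on a convex set where $\nabla\Phi$ is injective. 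This is Hörmander's Theorem 7.7.5-type estimate, whose constants depend only on these quantities.

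The main obstacle is making the constants in case (b) genuinely uniform in $P$ when the critical point moves or leaves the support. The cleanest route is the change of variables $y=\nabla(h+P)(x)-\nabla(h+P)(x_P)$ near $x_P$, or Morse coordinates depending continuously on $P$, whose Jacobian bounds are controlled by the uniform $C^m$ bounds above. The region away from $x_P$ is handled by case (a)-type integration by parts with $|\nabla(h+P)|\gtrsim|x-x_P|$, using a dyadic scale down to $|t|^{-1/2}$. Summing the dyadic pieces gives $|t|^{-d/2}$.
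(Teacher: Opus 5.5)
The paper gives no proof of this lemma. It is recorded in the appendix as a standard fact from the Varchenko--Karpushkin theory, so your proposal does not compete with a different route; it supplies the missing argument, and it does so correctly in substance.

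\textbf{What your argument buys.} Your key reduction is the Cauchy estimate $\|P\|_{C^m(\overline B_{\R^d}(0,r/2))}\lesssim_{m,d}\epsilon r^{-m}$. It turns $\mathcal H_r(\epsilon)$ into a $C^m$-small family of perturbations of $h$, and this is exactly what makes the constants uniform. After it, (a) is nonstationary phase and (b) is stationary phase, with constants depending on finitely many derivatives. Citing the result is brief, but making it explicit exposes two points the literal statement glosses over.
\begin{itemize}
\item You correctly flag the first: with the derivative count on $\psi$ fixed by $d$ as in the definition, (a) fails for large $N$. The number of derivatives must grow with the decay rate.
\item You use the second silently and should state it: $P$ must be real-valued on $\R^d$. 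The definition literally allows complex $P$, and the constant $P=i\epsilon/2$ multiplies the integral by $e^{-t\epsilon/2}$. In (b), for instance, this destroys any polynomial bound as $t\to-\infty$.
\end{itemize}

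\textbf{What to tighten in (b).} Write $\Phi=h+P$.
\begin{itemize}
\item Injectivity of $\nabla\Phi$ together with $|\det\Hess\Phi|\ge c$ does not control the constants uniformly. What H\"ormander's Theorem 7.7.5 needs is a uniform bound $|\nabla\Phi(x)|\ge c|x-x_P|$ on $\supp\psi$; there take $k=\lceil d/2\rceil$, since its remainder is only $O(t^{-k})$. Your dyadic argument needs the same bound. It follows by taking $r$ small enough that $\Hess h$ is close to $\Hess h(0)$ on the ball, and $\epsilon\ll r^2$. Then $\nabla\Phi(x)=\bigl(\int_0^1\Hess\Phi(x_P+s(x-x_P))\,ds\bigr)(x-x_P)$, and the averaged Hessian is uniformly close to $\Hess h(0)$.
\item The critical point always exists. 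For $\epsilon\ll r^2$ the map $x\mapsto(\nabla h)^{-1}(-\nabla P(x))$ is a contraction, which gives a unique $x_P$ with $|x_P|\lesssim\epsilon/r$. So your worry about there being no critical point never materializes. The position of $x_P$ relative to $\supp\psi$ is also irrelevant for both routes.
\end{itemize}
With these in place the dyadic argument is complete. The ball $|x-x_P|\lesssim|t|^{-1/2}$ contributes $O(|t|^{-d/2}\|\psi\|_{\infty})$. Each shell $|x-x_P|\sim\rho\ge|t|^{-1/2}$ contributes $O(\rho^d(|t|\rho^2)^{-N}\|\psi\|_{C^N})$. Summing gives $O(|t|^{-d/2}\|\psi\|_{C^N})$ with $N=\lfloor d/2\rfloor+1$, which depends only on $d$.

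The change of variables $y=\nabla(h+P)(x)-\nabla(h+P)(x_P)$ does not by itself make the phase quadratic. Drop it in favor of the dyadic argument, or of genuine Morse coordinates built with $C^m$-controlled bounds.
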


\subsection{Proof of Lemma \ref{lem-xyz}}\label{ssec-xyz}
Use the convention
\[
\widehat\eta(\zeta)
=\int_{\mathbb R}e^{-i\zeta w}\eta(w)\,dw.
\]
For \(\Lambda>0\), integration in \(w\) and evenness give
\[
H(\Lambda)
=4\int_0^\infty\int_0^\infty
\eta(u)\eta(v)\widehat\eta(\Lambda uv)\,dv\,du.
\]
The region \(0<u<\Lambda^{-1}\) contributes
\(O(\Lambda^{-1})\).
For \(u\ge\Lambda^{-1}\), set \(z=\Lambda uv\).
The Schwartz decay of \(\widehat\eta\) gives
\[
\int_0^\infty\eta(v)\widehat\eta(\Lambda uv)\,dv
=
\frac{\eta(0)}{\Lambda u}
\int_0^\infty\widehat\eta(z)\,dz
+O((\Lambda u)^{-2}).
\]
The error, integrated against \(\eta(u)\) for
\(u\ge\Lambda^{-1}\), is \(O(\Lambda^{-1})\).
Fourier inversion and evenness imply
\[
\int_0^\infty\widehat\eta(z)\,dz=\pi\eta(0),
\]
while
\[
\int_{\Lambda^{-1}}^\infty\frac{\eta(u)}{u}\,du
=\eta(0)\log\Lambda+O(1).
\]
These identities prove the stated asymptotic.
Finally, replacing \(u\) by \(-u\) shows that
\(H(-\Lambda)=H(\Lambda)\).

%\section*{Acknowledgement}

\section*{AI Disclosure}

The author has used the LLM GPT-6 during the development of this work. It has been employed to classify possible cases and assist with calculations. All mathematical statements and proofs have been checked and rewritten by the author, who takes full responsibility for the mathematical content.

%\nocite{BCH4,Karp,Stein}
%\bibliographystyle{amsplain}
%\bibliography{references}

\printbibliography

\end{document}